\def\be#1{\begin{equation}\label{#1}}
\def\bas{\begin{align*}}
\def\eas{\end{align*}}
\def\bi{\begin{itemize}}
\def\ei{\end{itemize}}
\theoremstyle{plain}
   \newtheorem{theorem}[subsection]{Theorem}
   \newtheorem{proposition}[subsection]{Proposition}
   \newtheorem{lemma}[subsection]{Lemma}
   \newtheorem{corollary}[subsection]{Corollary}
\newenvironment{proof}{\noindent {\bf Proof} }{\endprf\par}
\def \endprf{\hfill  {\vrule height6pt width6pt depth0pt}\medskip}
\def\emph#1{{\it #1}}
\def\textbf#1{{\bf #1}}
\theoremstyle{remark}
\theoremstyle{definition}
\begin{document}

\author[O. Bakas]{Odysseas Bakas}
\address{Odysseas Bakas: Department of Mathematics, Stockholm University, 106 91 Stockholm, Sweden}
\email{bakas@@math.su.se}

\author[E. Latorre]{Eric Latorre}
\address{Eric Latorre: ICMAT,
C/ Nicolás Cabrera, nº 13-15 Campus de Cantoblanco, UAM
28049 Madrid SPAIN. Partially supported by the grant  MTM2017-83496-P from the Ministerio de Ciencia e Innovaci{\'o}n (Spain) }
\email{eric.latorre@@icmat.es}

\author[D. Mart\'inez]{ Diana Cristina Rinc\'on Mart\'inez}
\address{Diana C. Rincon M.:
Institute of Mathematics, National Autonomous University of Mexico.
\'Area de la Investigaci\'on Científica, Circuito exterior, Ciudad Universitaria, 04510, Mexico, CDMX.}
\email{dcrinconm@@matem.unam.mx}

\author[J. Wright]{ James Wright }
\address{James Wright: School of Mathematics and Maxwell Institute for Mathematical Sciences, University of
Edinburgh, JCMB, King's Buildings, Peter Guthrie Tait Road, Edinburgh EH9 3FD, Scotland}
\email{J.R.Wright@@ed.ac.uk}


\subjclass{42B20, 42B30}

\title[Multiparameter oscillatory singular integrals]{A class of multiparameter oscillatory singular integral
operators: endpoint Hardy space bounds}


\maketitle

\begin{abstract} We establish endpoint bounds on a Hardy space $H^1$ for
a natural class of multiparameter singular integral operators which {\it do not} decay away
from the support of rectangular atoms. Hence the usual argument via a Journ\'e-type
covering lemma to deduce bounds on product $H^1$ is not valid. 

We consider the class of multiparameter oscillatory singular integral operators given by
convolution with the classical multiple Hilbert transform kernel modulated by a general
polynomial oscillation. Various characterisations are known which give 
$L^2$ (or more generally $L^p, 1<p<\infty$) bounds. 
Here we initiate an investigation of endpoint bounds on the rectangular Hardy space $H^1$
in two dimensions; we give a characterisation when bounds hold
which are uniform over a given subspace of polynomials and somewhat surprisingly, we 
discover that the Hardy space and $L^p$ theories for these operators are very different.
\end{abstract}


\section{Introduction}
There is a well developed connection between singular Radon transforms and 
oscillatory singular integral operators. For instance if $\Sigma$
is an $n$-dimensional surface given by the graph $\{(x, \Phi(x)) : x \in {\mathbb R}^n\}$
of a polynomial mapping $\Phi = (P_1, \ldots, P_k)$ where each 
$P_j \in {\mathbb R}[X_1,\ldots, X_n]$, then the so-called Hilbert transform along $\Sigma$,
$$
{\mathcal H}_{\Sigma, K} f(x, z) \ = \ p.v. \int_{{\mathbb R}^n} f(x - y, z - \Phi(y))
K(y) \, dy,
$$
has served as a model operator in the theory of singular Radon transforms. Here $K$ is a classical
Calder\'on-Zygmund kernel on ${\mathbb R}^n$. By computing the partial fourier transform in the
$z$ variable and using Plancherel's theorem, one sees that the $L^2$ boundedness of 
${\mathcal H}_{\Sigma, K}$
is equivalent to {\it uniform} $L^2$ boundedness of the oscillatory singular integral operator
$$
T_{\xi} \, g (x) \ = \ p.v. \int_{{\mathbb R}^n} g(x - y) \ 
e^{i \xi\cdot \Phi(y)} K(y) \, dy
$$
where we require uniformity in the frequency variable $\xi \in {\mathbb R}^k$. This
connection has been developed more deeply in \cite{RS1} and \cite{RS2}. It is well known that the operator 
${\mathcal H}_{\Sigma, K}$
is bounded on all $L^p$ with $1<p<\infty$ (see e.g. \cite{S}, Chapter XI ) but
a major open problem in the theory
of singular Radon transforms is to establish endpoint bounds; for example, to determine
whether or not ${\mathcal H}_{\Sigma, K}$ is weak-type $(1,1)$ or whether it is bounded 
on the appropriate real Hardy space $H^1$
(see \cite{C} for a result in this direction). This endeavour still seems to be a long term goal.

Although the boundedness properties of ${\mathcal H}_{\Sigma, K}$
and the uniform boundedness properties of $T_{\xi}$ are no longer equivalent
outside $L^2$, it has been of interest to
study operators of the form
$$
S_{P, K}\,  g (x) \ = \ p.v. \int_{{\mathbb R}^n} g(x - y) e^{i P(y)} K(y) \, dy
$$
for a general polynomial $P \in {\mathbb R}[X_1,\ldots, X_n]$ and determine whether they
are of weak-type $(1,1)$ or bounded on $H^1$.
A negative result would imply a negative outcome (and a positive result would give some
indication) for the corresponding singular Radon transforms. Since uniformity in the frequency
variable $\xi$ for the case $P(x) = \xi \cdot \Phi(x)$ where $\Phi$ is a general polynomial mapping
is required, one is naturally interested in $L^p$, weak-type $(1,1)$ and/or $H^1$ bounds
for $S_{P,K}$ which are {\it uniform}\footnote{There are other reasons for seeking such uniform
estimates; see e.g. \cite{S}} over the space of all polynomials of a fixed degree; that is,
bounds which are uniform in the coefficients of the polynomial oscillation $P$. 
This has been accomplished by a number of authors; for example, weak-type $(1,1)$ bounds by 
Chanillo and Christ \cite{CC} and Hardy space $H^1$ bounds
by Hu and Pan  \cite{Pan}, all bounds are uniform in the coefficients of the polynomial $P$.  

Recently the theory of singular Radon transforms has been extended to the multiparameter setting
and this was done for a number of reasons; see Street's monograph \cite{B} and the references
therein. This extension poses
a number of challenges in part because it is no longer the case that $L^2$ boundedness holds,
even when the underlying surface is polynomial. However we now have a good understanding
of the cancellation conditions needed to guarantee boundedness in various cases and 
furthermore, a general $L^p$
theory has been developed (see for example, \cite{NW}, \cite{D}, \cite{RS3}, \cite{B}, \cite{PY}, \cite{CWW1} and \cite{CWW2}). 
Needless to say, endpoint bounds for multiparameter singular Radon transforms are even more
challenging than the one parameter case which remains open. 

Exactly as in our discussion above, there is a connection between multiparameter singular
Radon transforms and multiparameter oscillatory singular integrals where now
the underlying Calder\'on-Zygmund $K$ has a multiparameter structure; for example, 
the multiple Hilbert transform kernel ${\mathcal K}(y) = 1/y_1\cdots y_n$. 
From the work of Ricci and Stein \cite{RS3} (via a simple lifting procedure), one can
determine precisely when $S_{P, {\mathcal K}}$ (equivalently $H_{\Sigma, {\mathcal K}}$)
is {\it uniformly} bounded on $L^2$. If 
$P(x) = \sum_{\alpha} c_{\alpha} x^{\alpha}$ is a real polynomial in $n$ variables,
we define the {\it support of $P$} as $\Delta_P = \{\alpha: c_{\alpha} \not= 0\}$.
For any finite subset $\Delta \subset {\mathbb N}_0^n$, let 
${\mathcal V}_{\Delta}$ denote 
the finite dimensional subspace of real polynomials $P$ in $n$ variables with $\Delta_P \subseteq \Delta$.

{\bf Ricci-Stein Theorem} (\cite{RS3}) {\it Fix $\Delta \subset {\mathbb N}_0^n$. Then
\begin{equation}\label{RS-L2}
\sup_{P \in {\mathcal V}_{\Delta}} \|S_{P, {\mathcal K}} \|_{L^2 \to L^2} \ < \ \infty
\end{equation}
holds if and only if for every $\alpha = (\alpha_1, \ldots, \alpha_n) \in \Delta$, at least $n-1$
of the $\alpha_j$'s are even.}

There is an equivalent formulation for $H_{\Sigma, {\mathcal K}}$.
This result depends on our particular choice of multiparameter Calder\'on-Zygmund kernel 
${\mathcal K}(y) = 1/y_1\cdots y_n$. For a fixed polynomial $P \in {\mathbb R}[X,Y]$,
then a necessary and sufficient condition on $P$ is given in \cite{B2} so that $S_{P, K}$
is bounded on $L^2({\mathbb R}^2)$ for all multiparameter Calder\'on-Zygmund kernels $K$. 

When uniformity is not sought, there are a number of results which characterise those
individual polynomials $P$ for which $S_{P, {\mathcal K}}$ is bounded on $L^2$. Furthermore
these characterisations depend on how one truncates the operator $S_{P, {\mathcal K}}$.
For example, when $n=2$ such a characterisation was given in \cite{CWW1} for the
local operator (when the integration over $y \in {\mathbb R}^2$ is restricted to $|y|\le 1$)
and the characterisation is given in terms of the Newton diagram of $P$ which depends only
on the support $\Delta_P$. In \cite{P} a different characterisation (but still depending 
only on the support of $P$) was found for the global operator where the integration is taken over
all $y\in {\mathbb R}^2$. 

This is in sharp contrast to what happens in $n=3$ for
the corresponding triple Hilbert transform with a polynomial oscillation; in \cite{CWW2}, it was shown that
two polynomials $P$ and $Q$ may have the same support $\Delta_P = \Delta_Q$ yet 
$S_{P, {\mathcal K}}$
is bounded on $L^2$ whereas $S_{Q, {\mathcal K}}$ is {\it not} bounded on $L^2$! Here 
${\mathcal K}(y) = 1/y_1 y_2 y_3$, the triple Hilbert transform kernel. So when $n=3$, matters
are much more delicate but nonetheless a characterisation of $L^2$ boundedness 
was found in \cite{CWW2} and depends not only on the support of $P$ but also
on the parity of the coefficients. See also \cite{CK}
for other results in $n=3$.

Here we will be interested in examining how the multiparameter 
oscillatory singular integral operator $S_{P,{\mathcal K}}$
acts on rectangular atoms. Recall
that a rectangular atom is an $L^2$ function $a_R$ supported in some rectangle $R$ (an $n$-fold
product of intervals) satisfying $\|a_R\|_{L^2} \le |R|^{-1/2}$ and possessing the cancellation property
$$
\int a_R (x_1,\ldots, x_{j-1}, y, x_{j+1}, \ldots, x_n) \, dy \ = \ 0
$$
for any $1\le j \le n$ and for almost every $x_1,\ldots, x_{j-1}, x_{j+1}, \ldots, x_n$. Given the
connection with multiparameter singular Radon transforms, we will be mainly interested
in {\it uniform} estimates and in particular we seek to understand when the estimate 
\begin{equation}\label{main-bound}
\int_{{\mathbb R}^n \setminus \gamma R} |S_{P, {\mathcal K}} \, a_R (x)| \, dx  \ \le \ C_{\gamma}
\end{equation}
holds
uniformly for all 
$P \in {\mathcal V}_{\Delta}$ for a fixed $\Delta \subset {\mathbb N}_0^n$.
Here $\gamma \ge 2$ and $\gamma R$ is the $\gamma$ dilate of $R$ with respect to its centre. 
If $S_{P, {\mathcal K}}$ is bounded on $L^2$, then an application of the Cauchy-Schwarz inequality
allows us to control $\|S_{P, {\mathcal K}} a_R\|_{L^1(\gamma R)}$. 

If there exists an $\epsilon>0$ such that $C_{\gamma} \le C_{\epsilon} \gamma^{-\epsilon}$
holds for some $C_{\epsilon}$ and all $\gamma\ge 2$, then\footnote{here $H^1_{\rm prod}$
is the natural real Hardy space associated to multiparameter dilations/structure.}
$S_{P, {\mathcal K}} : H^1_{\rm prod}({\mathbb R}^n) \to L^1({\mathbb R}^n)$, assuming
that $S_{P, {\mathcal K}}$ is also bounded on $L^2$. Furthermore, the $H^1_{\rm prod} \to L^1$
operator norm of $S_{P,{\mathcal K}}$ depends only on $C_{\epsilon}$ and its $L^2$ operator norm.
This result depends on a Journ\'e-type covering lemma for rectangles
and is due to  R. Fefferman \cite{RF} in the two parameter setting and J. Pipher \cite{JP} in the general multiparameter setting.

Interestingly any $\gamma$ decay bound in \eqref{main-bound} is {\it false} for
oscillatory singular integral operators, even in the one parameter setting, $n=1$ (see 
Section \ref{decay-failure} below). 
This explains our interest in obtaining bounds first on the {\rm rectangular} Hardy space
$H^1_{\rm rect}({\mathbb R}^n)$, the atomic space constructed 
from rectangular atoms.\footnote{elements in
$H^1_{\rm prod}$ also have an atomic decomposition but the atoms are more complicated, associated
to arbitrary open sets of finite measure.}
Hence bounds on $H^1_{\rm prod}({\mathbb R}^n)$, if true, requires a new, alternate approach and we leave this
for a future investigation.  

Our goal is to characterise those finite sets $\Delta \subset {\mathbb N}_0^n$ such that
\eqref{main-bound} holds uniformly for all $P\in {\mathcal V}_{\Delta}$ with a constant $C_{\gamma}
= C_{\gamma, \Delta}$
only depending on $\gamma$ and $\Delta$ (and of course, independent on the rectangular atom
$a_R$). By accomplishing this, we can then import any of the many $L^2$ results known for
$S_{P, {\mathcal K}}$, uniform or otherwise, and obtain boundedness 
from $H^1_{\rm rect}$  to $L^1$. But we highlight the Ricci-Stein Theorem which gives
us a characterisation of when {\it uniform} $L^2$ bounds hold and so, together with \eqref{main-bound},
would give us uniform bounds on $H^1_{\rm rect}$. 

In this paper we provide such a characterisation in two dimensions, when $n=2$.
First of all, without loss of generality,
we may assume $(0,0) \notin \Delta$. Furthermore when
$\Delta \subset {\mathbb N}_0^2$,
we set $\Delta_j = \{ k\ge 0 : (j,k) \in \Delta \}$ and $\Delta^k = \{j\ge 0 : (j,k) \in \Delta \}$.
\begin{theorem}\label{2-d} If $\Delta \subset {\mathbb N}_0^2$ with $(0,0)\notin \Delta$, then
\eqref{main-bound} holds uniformly for all $P\in {\mathcal V}_{\Delta}$ if and only if
\begin{equation}\label{nec-suff}
(a) \ (1,0) \ {\rm and} \ (0,1) \notin \Delta, \ \ \ {\rm and} \ \ \ (b) \ \ |\Delta_0| |\Delta_1| + |\Delta^0|
|\Delta^1| \ = \ 0.
\end{equation}
\end{theorem}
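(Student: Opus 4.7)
The first step is to rule out $(1,0) \in \Delta$ (the case $(0,1) \in \Delta$ is symmetric). I would take the one-monomial polynomial $P(x,y) = \lambda x \in \mathcal{V}_\Delta$ and a product atom $a_R = a \otimes b$ built from one-dimensional $H^1$-atoms $a, b$ supported in $[-1,1]$. Using the identity $T_\lambda f(x) := p.v.\int f(x-s) e^{i\lambda s}\, ds/s = e^{i\lambda x} H(e^{-i\lambda\cdot}f)(x)$, the operator factors as $S_{P,\mathcal{K}} a_R(x,y) = T_\lambda a(x)\cdot Hb(y)$. Choosing $\lambda$ with $\hat a(\lambda) \neq 0$, a direct expansion gives the asymptotic $T_\lambda a(x) \sim \hat a(\lambda)/(\pi x)$ as $|x|\to\infty$, so $\int_{|x|>2} |T_\lambda a(x)|\,dx = +\infty$. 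Splitting $\int_{\mathbb R^2\setminus 2R}$ into the two natural pieces and using $\|Hb\|_{L^1} < \infty$ forces the left-hand side of \eqref{main-bound} to be infinite, precluding any uniform $C_\gamma$.

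\textbf{Necessity of (b).} Assume (a) holds but, say, $|\Delta_0||\Delta_1| \neq 0$. Pick $k_0 \in \Delta_0$, $k_1 \in \Delta_1$: condition (a), together with $(0,0)\notin\Delta$, forces $k_0 \geq 2$ and $k_1 \geq 1$. I would take $P(x,y) = \mu y^{k_0} + \lambda x y^{k_1}$ and a product atom $a\otimes b$; carrying out the $s$-integration first gives
\[
S_{P,\mathcal{K}}(a\otimes b)(x,y) = p.v.\!\int T_{\lambda t^{k_1}} a(x)\, b(y-t)\, e^{i\mu t^{k_0}}\,\frac{dt}{t}.
\]
The same $T_{\lambda t^{k_1}} a(x) \sim \hat a(\lambda t^{k_1})/(\pi x)$ tail used in (a) appears; selecting $\lambda, \mu$ and the atoms so that the residual $t$-integral is not identically zero in $y$ produces a non-integrable $1/|x|$ tail in $S_{P,\mathcal{K}} a_R$, and \eqref{main-bound} fails. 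The symmetric condition $|\Delta^0||\Delta^1| \neq 0$ is handled the same way with $x, y$ interchanged.

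\textbf{Sufficiency: structural decomposition.} Under (a) and (b), one of $\Delta_0, \Delta_1$ and one of $\Delta^0, \Delta^1$ is empty, giving four subcases. In each, a short case analysis yields
\[
P(s,t) = A(s) + B(t) + \Phi(s,t),
\]
where $A, B$ are (possibly zero) one-variable polynomials satisfying $A(0) = A'(0) = B(0) = B'(0) = 0$ (using (a) and $(0,0) \notin \Delta$), and $\Phi$ is a polynomial vanishing on both coordinate axes, so $\Phi(s,t) = st\,\widetilde\Phi(s,t)$ for some polynomial $\widetilde\Phi$. I would then decompose
\[
\frac{e^{iP(s,t)}}{st} \;=\; \frac{e^{iA(s)}}{s}\cdot\frac{e^{iB(t)}}{t} \;+\; e^{iA(s)}e^{iB(t)}\cdot\frac{e^{i\Phi(s,t)} - 1}{st}.
\]
The first (separable) piece is a tensor product of two one-dimensional oscillatory Hilbert kernels, each uniformly bounded from $H^1(\mathbb R)$ to $L^1(\mathbb R)$ over the class of polynomials without linear term by the Hu--Pan theorem \cite{Pan}; a standard product-space atomic argument then extends this to a uniform bound from $H^1_{\rm rect}(\mathbb R^2)$ to $L^1(\mathbb R^2)$, giving \eqref{main-bound} for this piece.

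\textbf{The main obstacle.} The residual kernel $K_\Phi(s,t) = e^{iA(s)+iB(t)}(e^{i\Phi(s,t)} - 1)/(st)$ is globally bounded, since $(e^{i\Phi(s,t)}-1)/(st)$ is bounded (in fact $\approx i\widetilde\Phi$ near the origin), but it does not decay at infinity, so the $L^1$ bound cannot be achieved by size estimates alone. The plan is to use the double cancellation of the rectangular atom $a_R$ to subtract the one-variable slices $K_\Phi(s,0)$ and $K_\Phi(0,t)$ from the kernel, producing a kernel that vanishes on both axes; I would then exploit the two-variable oscillation of $e^{i\Phi}$ via van der Corput estimates applied separately in $s$ and in $t$, combined with a Littlewood--Paley decomposition adapted to the two dyadic scales determined by the sides of $R$. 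Arranging every constant to depend only on $\Delta$ and on $\gamma$, and not on the coefficients of $A, B, \widetilde\Phi$, is where the genuine difficulty lies and is the step I expect to require the most care.
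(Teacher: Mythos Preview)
Your necessity argument for (a) is fine: for fixed $\lambda$ with $\hat a(\lambda)\neq 0$ one has $|T_\lambda a(x)|\sim|\hat a(\lambda)|/|x|$ at infinity, which is not integrable, and the product structure finishes the job.

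Your necessity argument for (b), however, has a real gap. The asymptotic you invoke is incomplete: for $|x|$ large one actually has
\[
T_{\lambda t^{k_1}}a(x)\;=\;\frac{e^{\,i\lambda t^{k_1}x}}{x}\,\hat a(\lambda t^{k_1})\;+\;O(1/x^2),
\]
and the oscillatory factor $e^{i\lambda t^{k_1}x}$ cannot be dropped. When you feed this into the $t$-integral, the phase $\lambda t^{k_1}x+\mu t^{k_0}$ produces \emph{additional} decay in $|x|$ (by van der Corput in $t$), so the full expression decays faster than $1/|x|$ and is integrable. For a fixed choice of $\lambda,\mu$ and a fixed atom the left side of \eqref{main-bound} is in fact finite; there is no $1/|x|$ tail. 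The paper's approach is quite different: it takes $P_{c,d}(s,t)=cst^{k_1}+dt^{k_0}$ with $c=\epsilon^{k_1}$, $d=\epsilon^{k_0-1}$ tending to zero, restricts to $10\le|x_1|\le|x_2|\ll\epsilon^{-1}$, and after a chain of reductions (replacing $1/(x_2-t)$ by $1/x_2$, then $1/(x_1-s)$ by $1/x_1$, each with $O(1)$ error --- the latter requiring an $L^2$ multiplier estimate and the hypothesis $k_0\ge 2$) shows the remaining integral is $\gtrsim\log(1/\epsilon)$. The failure of \eqref{main-bound} is a failure of \emph{uniformity} as the coefficients degenerate, not a divergence for any individual polynomial.

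On sufficiency: your split $P=A+B+\Phi$ and the corresponding kernel decomposition into a tensor piece plus $K_\Phi$ is a natural opening, and Hu--Pan does handle the tensor piece. But this decomposition holds for \emph{every} polynomial with no constant or linear part; it does not use condition (b) at all. Since (b) is necessary, it must enter somewhere in the treatment of $K_\Phi$, and your outline never says where. Concretely, for $P(s,t)=cst^2+dt^4$, which violates (b), you get $A=0$, $B=dt^4$, $\Phi=cst^2$, and nothing in your ``main obstacle'' plan distinguishes this bad case from a good one. The paper organises the sufficiency proof very differently: a full dyadic decomposition $T_{\bf p}$, identification of the dominant monomial $|c_{\alpha_0}|2^{{\bf p}\cdot\alpha_0}$ on each piece, comparison with a hierarchy of model operators, and $T^*T$ estimates via Lemmas \ref{equiv-norms} and \ref{UL-1}. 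Condition (b) enters at one precise spot: when the dominant exponent has $j_0=0$, one passes to the secondary dominant $(j_1,k_1)\in\Delta\setminus\Delta_0$, and $\Delta_1=\emptyset$ forces $j_1\ge 2$; this extra factor of $2^p$ is exactly what makes the dyadic sum in $p$ converge (the paper flags this explicitly). Your proposal would need to locate an analogous mechanism inside the $K_\Phi$ analysis, and as written it does not.
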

Condition (a) is well known to be a necessary condition for any boundedness result on $H^1$ for 
oscillatory singular integral operators, even in the one parameter setting. Condition (b)
is the new, interesting necessary condition for this 2 parameter case.  Assuming condition
(a) holds, we see that condition (b)
fails precisely when there exist a $(0,k_0) \in \Delta$ with $k_0\ge 2$ AND there is a $(1, k_1) \in \Delta$ 
for some $k_1 \ge 1$ (or the corresponding situation holds with the coordinates swapped). 
In particular if $P(s,t) = c s t^2 + d t^4$, then the Ricci-Stein Theorem shows that $S_{P, {\mathcal K}}$
is bounded on $L^2$ (and in fact on all $L^p, 1<p<\infty$) with bounds which are uniform in $c$ and $d$. However by Theorem
\ref{2-d} this is {\it not} the case on $H^1_{\rm rect}$, showing a difference in the $L^p$
and Hardy space theories for this class of singular integral operators. 

We can combine Theorem \ref{2-d} with the Ricci-Stein Theorem to obtain a characterisation for uniform boundedness from
$H^1_{\rm rect}({\mathbb R}^2)$ to $L^1({\mathbb R}^2)$. First, we observe that if
 $S_{P,{\mathcal K}}:
H^1_{\rm rect}({\mathbb R}^n) \to L^1({\mathbb R}^n)$ is bounded uniformly for 
$P\in {\mathcal V}_{\Delta}$, then necessarily $S_{P, {\mathcal K}}$ is bounded on 
$L^2({\mathbb R}^n)$, uniformly for $P\in {\mathcal V}_{\Delta}$ (this follows from a
standard argument, see for example \cite{L}) and so $\Delta \subset {\mathbb N}_0^n$
necessarily satisfies the condition that every $\alpha \in \Delta$ has at least $n-1$ 
even components. 
\begin{corollary}\label{2-d-bound} Let $\Delta \subset {\mathbb N}_0^2$ and assume, without loss
of generality, $(0,0)\notin \Delta$. Then $S_{P, {\mathcal K}}: H^1_{\rm rect}({\mathbb R}^2)
\to L^1({\mathbb R}^2)$ is bounded uniformly for $P\in {\mathcal V}_{\Delta}$ if and only if
$j k$ is even for every $(j,k) \in \Delta$ AND condition
\eqref{nec-suff} holds.
\end{corollary}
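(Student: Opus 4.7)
The plan is to derive Corollary \ref{2-d-bound} directly by combining Theorem \ref{2-d} with the Ricci-Stein Theorem, using the standard reduction of $H^1_{\rm rect} \to L^1$ boundedness to uniform $L^1$ bounds on rectangular atoms.

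For the \textbf{necessity} direction, assume $S_{P, {\mathcal K}}: H^1_{\rm rect}({\mathbb R}^2) \to L^1({\mathbb R}^2)$ is uniformly bounded over $P \in {\mathcal V}_{\Delta}$. As remarked right after the statement of the Corollary (citing \cite{L}), this forces uniform $L^2$ boundedness, and the Ricci-Stein Theorem then gives that every $\alpha = (j,k) \in \Delta$ has at least $n-1 = 1$ even component, i.e.\ $jk$ is even. Moreover, since every rectangular atom $a_R$ has $\|a_R\|_{H^1_{\rm rect}} \le 1$, we have $\|S_{P,{\mathcal K}} a_R\|_{L^1({\mathbb R}^2)} \le C$ uniformly in $P$ and $a_R$, hence in particular
$$
\int_{{\mathbb R}^2 \setminus \gamma R} |S_{P,{\mathcal K}} a_R(x)|\, dx \ \le \ C
$$
for every $\gamma \ge 2$. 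Thus \eqref{main-bound} holds uniformly, and Theorem \ref{2-d} yields condition \eqref{nec-suff}.

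For the \textbf{sufficiency} direction, assume $jk$ is even for every $(j,k) \in \Delta$ and that \eqref{nec-suff} holds. The parity hypothesis combined with the Ricci-Stein Theorem provides a uniform bound $M := \sup_{P \in {\mathcal V}_{\Delta}} \|S_{P,{\mathcal K}}\|_{L^2 \to L^2} < \infty$. For a rectangular atom $a_R$, split
$$
\|S_{P,{\mathcal K}} a_R\|_{L^1({\mathbb R}^2)} \ = \ \|S_{P,{\mathcal K}} a_R\|_{L^1(2R)} + \|S_{P,{\mathcal K}} a_R\|_{L^1({\mathbb R}^2 \setminus 2R)}.
$$
The local piece is controlled by Cauchy-Schwarz and the $L^2$ bound: $\|S_{P,{\mathcal K}} a_R\|_{L^1(2R)} \le |2R|^{1/2} M \|a_R\|_{L^2} \le 2M$. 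The far piece is precisely \eqref{main-bound} at $\gamma = 2$, which holds uniformly in $P$ and $a_R$ by Theorem \ref{2-d}. Therefore $\|S_{P,{\mathcal K}} a_R\|_{L^1({\mathbb R}^2)} \le C_{\Delta}$ uniformly in $P \in {\mathcal V}_{\Delta}$ and in every rectangular atom $a_R$.

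To pass from atoms to all of $H^1_{\rm rect}$, one uses the atomic decomposition: any $f \in H^1_{\rm rect}({\mathbb R}^2)$ admits a representation $f = \sum_i \lambda_i a_i$ with $a_i$ rectangular atoms and $\sum_i |\lambda_i| \lesssim \|f\|_{H^1_{\rm rect}}$, giving $\|S_{P,{\mathcal K}} f\|_{L^1} \le C_{\Delta} \sum_i |\lambda_i| \lesssim C_{\Delta} \|f\|_{H^1_{\rm rect}}$. The only delicate point — and the place where one should be careful — is that the map $a_R \mapsto S_{P,{\mathcal K}} a_R$ must extend to this $\ell^1$ sum in a consistent way; this is standard once the uniform atomic bound is in hand and the operator is a priori defined on a dense subspace (e.g.\ Schwartz functions), so there is no real obstacle beyond invoking the atomic theory correctly. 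In summary, the work is entirely concentrated in Theorem \ref{2-d}, and the Corollary follows by a clean bookkeeping argument.
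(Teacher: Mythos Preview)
Your proposal is correct and follows essentially the same approach the paper outlines in the paragraph preceding the Corollary: use the standard fact that uniform $H^1_{\rm rect}\to L^1$ boundedness forces uniform $L^2$ boundedness (hence the Ricci--Stein parity condition), combine this with Theorem~\ref{2-d} for the necessity of \eqref{nec-suff}, and for sufficiency split each atom into the local piece on $2R$ (handled by Cauchy--Schwarz and the $L^2$ bound) and the far piece (handled by \eqref{main-bound} via Theorem~\ref{2-d}). The only cosmetic slip is that the observation you cite appears just \emph{before} the Corollary, not after it.
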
 

{\bf Notation} Uniform bounds for oscillatory integrals lie at the heart of this paper. Keeping track of constants
and how they depend on the various parameters will be important for us. For the most part, constants $C$
appearing in inequalities $P \le C Q$ between positive quantities $P$ and $Q$ will be {\it absolute} or
{\it uniform} in that they can be taken to be independent of the parameters of the underlying problem. 
We will use $P \lesssim Q$ to denote $P \le C Q$ and $P \sim Q$ to denote $C^{-1} Q \le P \le C Q$.
If $P$ is a general real or complex quanitity,
we write $P = O(Q)$  to denote $|P| \le C Q$ and when we want to highlight a dependency on a parameter
$\gamma$, we write $P = O_{\gamma}(Q)$ to denote $|P| \le C_{\gamma} Q$.

We will use multi-index notation: if $\alpha = (j,k) \in {\mathbb N}_0^2$
and $x = (x_1, x_2) \in {\mathbb R}^2$, we denote $x^{\alpha}$ as the
monomial $x_1^{j} x_2^{k}$ and we use the notation
$$
\partial^{\alpha} \phi (x) \ = \ 
\frac{\partial^{j+k} \phi}{\partial x_1^{j} \partial x_2^{k}} (x) 
$$
to denote the associated partial derivative. We also write $|\alpha| = j+k$.

\section{Failure of decay in \eqref{main-bound}}\label{decay-failure} Here we prove that there is no decay
in $\gamma$ in the bound \eqref{main-bound} for the class of oscillatory singular integral
operators, even in the one parameter case. Hence one cannot establish bounds
on $H^1_{\rm prod}$ for this class of singular integral operators 
by the usual method via a Journ\'e-type covering lemma.

We begin with the most classical oscillatory singular
integral operator
$$
Tf(x) \ = \ \int_{{\mathbb R}} e^{i(x-y)^2} \frac{1}{x-y} f(y) \, dy
$$
and prove the following.
\begin{proposition}\label{failure} There does not exist an $\epsilon>0$ such that 
$$
\int_{\gamma |I| \le |x|} |T a_I (x) | \, dx \ \le \ C \gamma^{-\epsilon}
$$
holds for some $C$, every $\gamma\ge 2$ and all atoms $a_I$ associated with
intervals $I$.
\end{proposition}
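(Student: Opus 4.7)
The plan is to exhibit, for each sufficiently large $\gamma$, an interval $I_\gamma$ of length $L=\gamma^{-1/2}$ and an atom $a_\gamma$ on $I_\gamma$ such that $\int_{|x|\ge \gamma L} |Ta_\gamma(x)|\,dx$ is bounded below by a positive absolute constant; this immediately rules out any estimate of the form $C\gamma^{-\epsilon}$ with $\epsilon>0$. Specifically, take $I_\gamma=[-L/2,L/2]$ and the $L^2$-normalised odd atom
$$a_\gamma(y) \ = \ L^{-1}\bigl(\chi_{[0,L/2]}(y) - \chi_{[-L/2,0]}(y)\bigr),$$
with primitive $A_\gamma$ supported on $I_\gamma$, satisfying $\|A_\gamma\|_\infty\le 1/2$ and $\|A_\gamma\|_1\le L/2$.

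The first step is integration by parts. Using $\partial_y\bigl(e^{i(x-y)^2}/(x-y)\bigr) = -2i\,e^{i(x-y)^2} + e^{i(x-y)^2}/(x-y)^2$, one obtains
$$Ta_\gamma(x) \ = \ 2i\int A_\gamma(y)\,e^{i(x-y)^2}\,dy \ - \ \int \frac{A_\gamma(y)\,e^{i(x-y)^2}}{(x-y)^2}\,dy.$$
For $|x|\ge 2L$ the second integral is pointwise bounded by $4\|A_\gamma\|_1/|x|^2\le 2L/|x|^2$, contributing $O(1/\gamma)$ after integration over $|x|\ge \gamma L$. For the first integral, expanding $(x-y)^2 = x^2-2xy+y^2$, using $|e^{iy^2}-1|\le L^2$ on $I_\gamma$, and invoking the identity $\widehat{A}_\gamma(2x)=\widehat{a}_\gamma(2x)/(2ix)$ (since $a_\gamma = A'_\gamma$) yields the pointwise expansion
$$2i\int A_\gamma(y)\,e^{i(x-y)^2}\,dy \ = \ \frac{e^{ix^2}}{x}\,\widehat{a}_\gamma(2x) \ + \ O(L^3),$$
with an absolute implicit constant.

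The final step evaluates the main term explicitly. A direct calculation gives $|\widehat{a}_\gamma(2x)| = 2\sin^2(xL/2)/(L|x|)$, so after the substitution $u=xL/2$,
$$\int_{\gamma L\le|x|\le 100/L}\frac{|\widehat{a}_\gamma(2x)|}{|x|}\,dx \ = \ 2\int_{\gamma L^2/2\le |u|\le 50}\frac{\sin^2 u}{u^2}\,du.$$
With $L=\gamma^{-1/2}$ the lower limit equals $1/2$, and the integral equals the fixed positive absolute constant $c_0 := 2\int_{1/2\le |u|\le 50}\sin^2(u)/u^2\,du$. The two error terms contribute at most $O(L^2)+O(1/\gamma) = O(1/\gamma)$ when integrated over the same window, which is smaller than $c_0/2$ for $\gamma$ large. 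The reverse triangle inequality then gives $\int_{|x|\ge \gamma L}|Ta_\gamma(x)|\,dx\ge c_0/2$, contradicting any $C\gamma^{-\epsilon}$ bound.

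The only real obstacle is to balance the two competing errors — the quadratic-phase correction of order $L^3$ and the IBP remainder of order $L/|x|^2$ — against the main term. The scale $L=\gamma^{-1/2}$ is forced by placing the peak of $|\widehat{a}_\gamma|$, located at $|x|\sim 1/L$, outside the excluded window $|x|<\gamma L$; this requires exactly $L\lesssim \gamma^{-1/2}$.
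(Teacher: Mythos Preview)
Your argument is correct and leads to the same conclusion as the paper, with the same scale relation $L=\gamma^{-1/2}$ (equivalently $\gamma=|I|^{-2}$) and the same Fourier-analytic core: the main contribution is $|\widehat{b}(2x)|/|x|$ for the odd step function $b$, integrated over a window where it is bounded below by an absolute constant.

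The route you take to isolate that main term is genuinely different, however. The paper chooses a \emph{modulated} atom $a_I(s)=e^{-is^2}b_I(s)$; this makes $e^{i(x-s)^2}a_I(s)=e^{ix^2}e^{-2ixs}b_I(s)$ exactly, so no error arises from the quadratic term $s^2$, and the reduction to $\widehat{b_I}(2x)/x$ follows simply by adding and subtracting $1/x$ in the kernel. You instead keep the unmodulated odd atom and integrate by parts: the derivative of $e^{i(x-y)^2}/(x-y)$ splits off $-2ie^{i(x-y)^2}$, and combining $2i\widehat{A_\gamma}(2x)=\widehat{a_\gamma}(2x)/x$ recovers the same main term, at the cost of two controllable errors (the IBP remainder $O(L/|x|^2)$ and the Taylor error $|e^{iy^2}-1|=O(L^2)$). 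Your approach is perhaps more systematic and would adapt to phases other than the exact quadratic, since it does not rely on the algebraic cancellation that the modulation trick exploits; the paper's approach is slicker for this specific phase because it eliminates one of the error terms outright.
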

\begin{proof} We simply consider intervals $I = [-1/2 |I|, 1/2 |I|]$ for small $|I| \ll 1$
and take
$a_I (s) = e^{-i s^2} b_I(s)$ where
$b_I(s) = 1$ when $0\le s \le |I|/2$ and $b_I(s) = -1$ for $-1/2 |I| \le s < 0$. On easily
checks that $a_I$ is an atom associated with the interval $I$. 
We will
take $\gamma = |I|^{-2}$ and show that 
\begin{equation}\label{no-decay-bound}
\int_{\gamma |I| \le |x|} |T a_I (x)| \, dx \ \gtrsim \ 1
\end{equation}
which will establish the proposition. For this atom $a_I$, we add and subtract $1/x$ in
the definition of $T a_I (x)$ to conclude that
$$
\int_{\gamma |I| \le |x|} |T a_I (x) | \, dx \ \ge \ \int_{\gamma |I| \le |x|} \frac{1}{|x|} \
\Bigl| \int_{\mathbb R} e^{i (x-s)^2} a_I (s) ds \Bigr| \ dx \ - \ 2 \gamma^{-1}
$$
where we take $\gamma = |I|^{-2} \gg 1$. However 
$e^{i(x-s)^2} a_I (s) = e^{i x^2} e^{- 2i x s} b_I(s)$ and so
$$
\Bigl| \int_{\mathbb R} e^{i (x-s)^2} a_I (s) ds \Bigr| \ = \ \bigl| {\widehat b_I}(2 x) \bigr| \
= \ \frac{ |\cos( x |I|) - 1 |}{|x| |I|} \ \gtrsim \ \frac{1}{|x| |I|}
$$
holds for any $x$ satisfying $| |x| |I| - k \pi/2 | \le \pi/200$ for some odd $k\ge 1$. Therefore
when $\gamma = |I|^{-2}$, we have
$$
\int_{\gamma |I| \le |x|} \frac{|{\widehat b_I}(x)|}{|x|}  \, dx \ \gtrsim \ \sum_{k: k \, {\rm odd}} 
\int_{E_k} \frac{|{\widehat b_I}(x)|}{|x|} \, dx \ \gtrsim \ 
\frac{1}{|I|} \, \sum_{k: k \, {\rm odd}} \int_{E_k} \frac{1}{x^2} \, dx
$$
where $E_k = \{ x: ||x| |I| - k \pi/2| \le \pi/200 \}$. Since $|E_k| \sim |I|^{-1}$ and
$|x| \sim k/|I|$ 
for $x \in E_k$, we have
$$
\frac{1}{|I|} \, \sum_{k: k \, {\rm odd}} \int_{E_k} \frac{1}{x^2} \, dx \ \sim \ 
|I| \, \sum_{k: \, {\rm odd}} \frac{1}{k^2} \, |E_k| \ \gtrsim \ \sum_{k: \, {\rm odd}} \frac{1}{k^2}
\ \gtrsim \ 1,
$$
establishing \eqref{no-decay-bound} as desired.
\end{proof}

From Proposition \ref{failure} we can easily construct examples in higher dimensions
simply by taking $n$-fold products.

\section{A more robust formulation and some preliminaries}\label{prelims}
We fix a finite set $\Delta \subset {\mathbb N}_0^2$ satisfying condition \eqref{nec-suff} in
Theorem \ref{2-d}. We
also fix a $P(x) = \sum c_{\alpha} x^{\alpha}$ with $\Delta_P \subseteq \Delta$
but we keep in mind that our estimates should always be independent of $P \in {\mathcal V}_{\Delta}$.

Let $\phi \in C^{\infty}_0({\mathbb R})$ be an even function which is supported in $\{|s|\sim 1\}$
and has the property that
$\sum_{p\in {\mathbb Z}} \phi(2^{-p} s) \equiv 1$ for all $s\in {\mathbb R}\setminus\{0\}$. 
Set $\psi_p (s) = \phi(2^{-p} s)/s$ and for ${\bf p} = (p, q) \in {\mathbb Z}^2, \
y = (y_1, y_2) \in {\mathbb R}^2$, 
we write $\psi_{\bf p}(y) = \psi_{p}(y_1)\psi_{q}(y_2)$ and
$$
T_{\bf p} f(x)  := \ \int_{{\mathbb R}^2} \psi_{\bf p}(y) e^{i P(y)} f(x-y) \, dy.
$$
For any finite subset ${\mathcal F} \subset {\mathbb Z}^n$, we consider
the following general truncation of our operator $S_{P, {\mathcal K}}$,
$$
T_{\mathcal F} f(x) \ := \ \sum_{{\bf p}\in {\mathcal F}} T_{{\bf p}} f(x).
$$
Our main goal is to prove the bound \eqref{main-bound} for $T_{\mathcal F}$,
uniformly for all finite subsets ${\mathcal F}$. This implies a more robust version
of Theorem \ref{2-d}. We note that the Ricci-Stein Theorem
also holds uniformly for all such truncations. By translation invariance and since we seek
bounds which hold uniformly for all $P\in {\mathcal V}_{\Delta}$, we may assume,
without loss of generality, that the support of the rectangular atom $a_R$
is the unit square; that is, matters are reduced to showing that for $\gamma\ge 2$,
\begin{equation}\label{reduced-main-bound}
\int_{|x| \ge \gamma} |T_{\mathcal F} a(x) | \, dx \ \le \ C_{\gamma}
\end{equation}
holds uniformly for all atoms $a$ supported in the unit square, for all $P \in {\mathcal V}_{\Delta}$
and for all finite subsets ${\mathcal F}\subset {\mathbb Z}^2$.  

We now give a few useful results which we will use time and time again.

For $Q(x) = \sum d_{\alpha} x^{\alpha} \in {\mathcal V}_{\Delta}$ define
$\|Q\|_1 = \sum |d_{\alpha}|$ and for some fixed $C_0\ge 1$, set
$$
||| Q ||| \ := \ \max_{\alpha\in \Delta} \inf_{x\in [-C_0, C_0]^n} |\partial^{\alpha} Q(x)|.
$$

\begin{lemma}\label{equiv-norms} Let $\Delta \subset {\mathbb N}_0^n$ be a finite subset
with ${\vec 0} \notin \Delta$. For $C_0 \ge 1$, define $|||\cdot |||$ as above. Then 
there is a positive constant $C>0$, dependingly only on $\Delta$, $C_0$ and $n$ such that
\begin{equation}\label{equiv-norms-inequality}
|||Q||| \ \ge \ C \, \|Q\|_1
\end{equation}
holds for every  $Q \in {\mathcal V}_{\Delta}$.
\end{lemma}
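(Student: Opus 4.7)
The plan is to exploit the finite-dimensionality of $\mathcal{V}_\Delta$ via compactness, reducing matters to a short pointwise verification that $|||Q|||>0$ whenever $Q\ne 0$. Both $\|\cdot\|_1$ and $|||\cdot|||$ are positively homogeneous of degree one, so by rescaling it suffices to establish a uniform lower bound $|||Q|||\ge C>0$ on the compact $\|\cdot\|_1$-unit sphere $\mathcal{S}=\{Q\in\mathcal{V}_\Delta:\|Q\|_1=1\}$.

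First I would verify that $Q\mapsto|||Q|||$ is continuous on $\mathcal{V}_\Delta$. For each fixed $\alpha\in\Delta$, the map $Q\mapsto\partial^\alpha Q$ is linear on the finite-dimensional space $\mathcal{V}_\Delta$, so $\|\cdot\|_1$-convergence forces uniform convergence of $\partial^\alpha Q$ on the compact cube $[-C_0,C_0]^n$; hence $Q\mapsto\inf_x|\partial^\alpha Q(x)|$ is continuous, and taking the maximum over the finite set $\Delta$ preserves continuity. By compactness, $|||\cdot|||$ attains a minimum on $\mathcal{S}$, and the lemma is equivalent to showing that this minimum is strictly positive.

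The substantive step is a one-line \emph{peel-a-maximal-exponent} argument. Given $Q=\sum_{\alpha\in\Delta_Q}d_\alpha x^\alpha\ne 0$, choose any $\alpha^*\in\Delta_Q$ that is maximal with respect to the componentwise partial order on $\mathbb{N}_0^n$. For any other $\beta\in\Delta_Q$ with $\beta\ne\alpha^*$, maximality forces some coordinate $\beta_i<\alpha^*_i$, whence $\partial^{\alpha^*}x^\beta\equiv 0$. Therefore $\partial^{\alpha^*}Q\equiv(\alpha^*)!\,d_{\alpha^*}$ is a nonzero constant on $\mathbb{R}^n$, and consequently
$$|||Q|||\;\ge\;\inf_{x\in[-C_0,C_0]^n}|\partial^{\alpha^*}Q(x)|\;=\;(\alpha^*)!\,|d_{\alpha^*}|\;>\;0.$$

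I do not anticipate any real obstacle. The only mild conceptual point worth flagging is that $|||\cdot|||$ is not generally a norm on $\mathcal{V}_\Delta$ --- the infimum in its definition easily breaks the triangle inequality --- so one cannot simply invoke equivalence of norms on a finite-dimensional space; the argument must go via compactness plus the pointwise positivity just established. The resulting constant $C=\min_{\mathcal{S}}|||\cdot|||$ visibly depends only on $\Delta$, $C_0$, and $n$, as required.
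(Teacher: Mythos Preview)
Your proof is correct and follows essentially the same compactness-on-the-unit-sphere strategy as the paper, including the key observation that for a suitably maximal multi-index $\alpha^*$ the derivative $\partial^{\alpha^*}Q$ is the nonzero constant $(\alpha^*)!\,d_{\alpha^*}$. The only cosmetic difference is that the paper argues by contradiction (extracting a sequence $Q_j\to Q$ with $|||Q_j|||\to 0$ and then peeling off coefficients by induction on $|\alpha|$), whereas you verify continuity of $|||\cdot|||$ up front and then show positivity directly via a single maximal $\alpha^*$ in the componentwise order; your route is slightly more streamlined but not substantively different.
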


\begin{proof} The proof is just the usual {\it equivalence of norms} argument although
$|||\cdot|||$ is not a norm (the triangle inequality fails). However it does act enough like a norm
to make the usual argument work.

Note that $|||\lambda Q||| = |\lambda| |||Q|||$ for any scalar $\lambda \in {\mathbb R}$
and from this we see that \eqref{equiv-norms-inequality} holds with
$$
C \ := \ \inf_{Q \in S_1} |||Q |||
$$
where $S_1 = \{ Q \in {\mathcal V}_{\Delta} : \|Q\|_1 = 1\}$. It suffices to show that
$C$ is positive. Suppose $C = 0$. Since $S_1$ is the unit sphere in
the finite dimensional vector space ${\mathcal V}_{\Delta}$ with respect to the norm $\|\cdot\|_1$,
it is compact and so we can find a sequence $Q_j \in S_1$ such that $\|Q_j - Q\|_1 \to 0$
for some $Q\in S_1$ and such that $|||Q_j||| \to 0$. We will see that this implies $Q =0$ which
gives us our contradiction since $Q \in S_1$ and hence nonzero. 

First we observe that for every $\alpha \in \Delta$, the corresponding
coefficient $d_{\alpha}^j$ of $Q_j$ tends to zero. This follows from $|||Q_j||| \to 0$
by a simple induction argument, starting with those $\alpha_0 \in \Delta$
satisfying $|\alpha_0| = \max_{\alpha \in \Delta} |\alpha|$ and hence 
$\partial^{\alpha_0} Q_j (x) \equiv d_{\alpha_0}^j \alpha_0 !$.
But since $\|Q_j - Q\|_1 \to 0$,
we see that $d_{\alpha}^j$ converges to  $d_{\alpha}$,
the corresponding coefficient of $Q$. Hence $d_{\alpha} = 0$ for every $\alpha \in \Delta$
and so $Q = 0$.
\end{proof}

We will use Lemma \ref{equiv-norms} to estimate oscillatory integrals with polynomial
phases. In fact we will use Lemma \ref{equiv-norms} in combination with the following
higher dimensional version of van der Corput's lemma.
 
\begin{lemma}\label{UL-1} Let $\Delta$ be a finite subset of ${\mathbb N}_0^n$
such that ${\bf{0}} \notin \Delta$. Then for every $C_0 > 0$ and $\psi \in C^{\infty}_0({\mathbb R}^n)$
with ${\rm supp}(\psi) \subset [-C_0, C_0]^n$, there is a 
$\delta$ with $0<\delta<1$ and $C$, both depending only on $|\Delta|, C_0$ and $n$, such that
whenever we have a uniform bound from below $|\partial^{\alpha} Q(x)| \ge \lambda$ on
the support of $\psi$
for some derivative $\alpha \in \Delta$ of an element $Q\in {\mathcal V}_{\Delta}$, then
\begin{equation}\label{VC}
\Bigl| \int_{{\mathbb R}^n} e^{i Q(x)} \, \psi(x) \,  dx \Bigr| \ \le \ C \lambda^{-\delta} 
(\|\psi\|_{L^{\infty}} + \|\nabla \psi\|_{L^1} )
\end{equation}
holds.
\end{lemma}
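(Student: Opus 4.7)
The plan is to prove Lemma \ref{UL-1} by induction on the dimension $n$, with a companion sublevel set estimate for polynomials running in parallel in the inductive step. The rough strategy is to freeze all but one variable, apply the classical one-dimensional van der Corput lemma on the good set where the relevant pure partial derivative is large, and control the complementary bad set by a polynomial sublevel set bound.

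For the base case $n=1$ the assertion reduces to the classical van der Corput lemma. Indeed $Q \in \mathbb{R}[X]$ is a polynomial with $|Q^{(\alpha)}|\ge\lambda$ on $[-C_0,C_0]$ for some $\alpha\ge 1$; when $\alpha\ge 2$ the van der Corput lemma together with integration by parts against the cutoff $\psi$ gives decay $\lambda^{-1/\alpha}(\|\psi\|_\infty+\|\psi'\|_{L^1})$, and when $\alpha=1$ continuity forces $Q'$ to have constant sign, so $Q$ is monotonic and integration by parts yields the same form of bound. This establishes the base case with $\delta$ equal to the reciprocal of $\max_{\beta\in\Delta}|\beta|$.

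For the inductive step $n\ge 2$, fix $\alpha\in\Delta$ realizing $|\partial^{\alpha}Q|\ge\lambda$, and choose a coordinate, say $j=1$, with $\alpha_1\ge 1$. Write $\alpha=\alpha_1 e_1+\alpha'$ and set $R:=\partial_1^{\alpha_1}Q$, so that $|\partial^{\alpha'}R|\ge\lambda$ on $[-C_0,C_0]^n$. For a threshold $\mu\in(0,1)$ to be chosen, decompose the support of $\psi$ into $E_\mu=\{|R|\le\mu\}$ and its complement. On the complement, freeze $x'=(x_2,\ldots,x_n)$: the slice $\{x_1 : |R(x_1,x')|>\mu\}$ is a union of at most $O_\Delta(1)$ intervals, because $R(\cdot,x')$ is a polynomial in $x_1$ of degree controlled by $\Delta$; on each of these intervals $|\partial_1^{\alpha_1}Q(\cdot,x')|\ge\mu$, so the one-dimensional van der Corput lemma with a smooth cutoff contributes decay $\mu^{-1/\alpha_1}(\|\psi\|_\infty+\|\partial_1\psi\|_{L^1_{x_1}})$, and integrating in $x'$ yields a total of $O(\mu^{-1/\alpha_1}(\|\psi\|_\infty+\|\nabla\psi\|_{L^1}))$. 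On $E_\mu$ we bound trivially by $|E_\mu|\cdot\|\psi\|_\infty$. To close the argument we need a polynomial sublevel set estimate $|E_\mu|\lesssim(\mu/\lambda)^{\delta'}$: this is trivial when $\alpha'=\mathbf{0}$ (then $E_\mu=\emptyset$ for $\mu<\lambda$) and otherwise follows by a parallel induction on $n$, slicing in a coordinate where $\alpha'$ is positive and invoking the one-dimensional sublevel set lemma $|\{|f|\le\mu\}|\lesssim(\mu/\lambda)^{1/k}$ for $f$ with $|f^{(k)}|\ge\lambda$. Balancing $\mu$ in the resulting $[\mu^{-1/\alpha_1}+(\mu/\lambda)^{\delta'}](\|\psi\|_\infty+\|\nabla\psi\|_{L^1})$ produces the desired $\lambda^{-\delta}$ bound.

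The main technical obstacle will be the accompanying sublevel set estimate and the bookkeeping of constants: differentiating $Q$ takes $R$ outside $\mathcal{V}_\Delta$ in general, but its degree and number of nonzero coefficients are controlled in terms of those of $\Delta$, so the parallel induction closes with uniform constants depending only on $|\Delta|$, $n$, and $C_0$.
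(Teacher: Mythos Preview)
Your argument is correct and self-contained, but it differs in presentation from the paper's proof. The paper does not carry out any induction at all: it simply invokes Proposition~4.14 of Carbery--Christ--Wright \cite{CCW} as a black box to obtain the bound $C\lambda^{-1/|\alpha|}(\|\psi\|_{L^\infty}+\|\nabla\psi\|_{L^1})$, and then interpolates with the trivial bound $\|\psi\|_{L^1}$ to ensure the exponent $\delta$ lies strictly below $1$. Your approach, by contrast, essentially \emph{reproves} that CCW proposition via the standard slicing strategy---one-dimensional van der Corput on the good set, paired with an inductive polynomial sublevel estimate on the bad set---which is indeed how such multidimensional van der Corput lemmas are typically established. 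What your route buys is self-containment and an explicit mechanism showing why the constants depend only on the degree data and not on the coefficients of $Q$; what the paper's route buys is brevity and a clean separation of concerns, since the uniformity issues (the lack of control on the $C^k$ norms of $Q/\lambda$) are precisely what the CCW result is designed to handle. One small remark: the phrase ``with a smooth cutoff'' in your good-set step is slightly misleading, since the slice intervals $\{x_1:|R(x_1,x')|>\mu\}$ have $x'$-dependent endpoints where $\psi$ need not vanish; but the one-dimensional van der Corput lemma on an interval handles this with the stated $\|\psi\|_\infty+\|\partial_1\psi\|_{L^1}$ bound regardless, so the argument goes through.
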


For our applications, the importance of this lemma lies in the uniformity in the bound \eqref{VC},
the fact that the constant $C$ depends only on $\Delta, C_0$ and $n$ and otherwise
can be taken to be independent of $Q$ and $\lambda$. Due to this uniformity,  
the proof does not quite follow from the standard higher dimensional version of the
classical van der Corput's lemma as found for instance in \cite{S}, Proposition 5 page 342,
since we do not necessarily have uniform control of the $C^k$ norms of $Q(x)/\lambda$. This would
be the case IF $\lambda$ is comparable $\|Q\|_1$, and although by Lemma \ref{equiv-norms}
we can always find a $\beta \in \Delta$ so that the uniform bound
$|\partial^{\beta}Q(x)| \gtrsim \|Q\|_1$ holds on the support of $\psi$ (and hence
the result in \cite{S} would imply the bound in \eqref{VC} with $\lambda = \|Q\|_1$), our applications
combining Lemmas \ref{equiv-norms} and \ref{UL-1} are somewhat nonstandard. 

At times our arguments will 
have the following format: given a polynomial phase $\Phi \in {\mathcal V}_{\Delta}$ whose
corresponding oscillatory integral given in \eqref{VC} is the object we would like to bound, it will not
be clear how to successfully estimate $\|\Phi\|_1$ from below. Nevertheless, we will be able pass 
to a related polynomial $Q= Q_\Phi$ whose norm $\|Q\|_1$ can be effectively bounded below
and furthermore, we will be able to relate derivatives of $Q$ to derivatives of $\Phi$. We will
apply Lemma \ref{equiv-norms} to $Q$ to find a derivative of $Q$ bounded below by
$\|Q\|_1$ and then deduce a derivative bound for $\Phi$ in terms of $\|Q\|_1$.
We will then apply Lemma \ref{UL-1} to $\Phi$ with $\lambda = \|Q\|_1$. The two norms
$\|\Phi\|_1$ and $\|Q\|_1$ will {\bf not} be comparable in general.  

{\bf Proof of Lemma \ref{UL-1} }
The bound \eqref{VC} follows from a higher dimensional
version of  van der Corput's lemma found in \cite{CCW}, Proposition 4.14 on page 1004,
whose hypotheses are satisfied for polynomials with bounded degree with a concluding
bound which has the desired uniformity. 

In fact the bound given in Proposition 4.14 in \cite{CCW} is
$$
\Bigl| \int_{{\mathbb R}^n} e^{i Q(x)} \, \psi(x) \,  dx \Bigr| \ \le \ C \lambda^{-1/|\alpha|} 
(\|\psi\|_{L^{\infty}} + \|\nabla \psi\|_{L^1} )
$$
but since the bound $\|\psi\|_{L^1}$ trivially holds, we see that \eqref{VC} holds with
$\delta = \min( \delta^{*}, 1/2)$ where $\delta^{*} = \max( 1/|\alpha| : \alpha \in \Delta)$.
This completes the proof of Lemma \ref{UL-1}.

As an application of Lemmas \ref{equiv-norms} and \ref{UL-1}, with the format described above,
we derive an $L^2$ bound for $T_{\bf p}$.
More precisely, since we are interested only in how $T_{\bf p}$ acts on atoms supported
in the unit square, we consider the operator
$$
{\tilde T}_{\bf p} f(x) \ := \ \int_{{\mathbb R}^2} \psi_{\bf p}(x - y) e^{i P(x-y)} \varphi(y) f(y) \, dy
$$
for some $\varphi \in C^{\infty}_0({\mathbb R}^2)$ supported in $[-3,3]^2$
with $\varphi(x) \equiv 1$ for all $x$ in the
unit square. We will apply the above two lemmas to deduce a bound for the kernel
of ${\tilde T}^{*}_{\bf p} {\tilde T}_{\bf p}$ which in turn will give us a bound on the $L^2$ operator
norm of ${\tilde T}_{\bf p}$. 

For ${\bf p} = (p, q) \in {\mathbb Z}^2$ and $\alpha = (j,k) \in {\mathbb N}_0^2$,
we use the notation ${{\bf p}\cdot \alpha} = p j + q k$ and
$2^{\bf p} \circ x^{\alpha} = (2^{p} x_1)^{j} (2^{q} x_2)^{k}$.

\begin{proposition}\label{T*T} Let ${\bf p} = (p, q) \in {\mathbb Z}^2$ be ordered, 
$p\le q$. Then for some $0 < \delta < 1$,
\begin{equation}\label{L2-bound}
\|{\tilde T}_{\bf p}\|_{L^2} \ \lesssim \ 
\begin{cases}
2^{- q/2} \ \ \ |c_{\alpha_{*}} \, 2^{p j_{*} + q k_{*}} |^{-\delta} & \mbox{\rm if} \ p\le 0 \\
2^{-(p+q)/2} \ |c_{\alpha_{*}} \, 2^{p(j_{*} -1) + q k_{*}} |^{-\delta} & \mbox{\rm if} \ p\ge 0 
\end{cases}
\end{equation}
where $P(x) = \sum c_{\alpha} x^{\alpha}$ and  $\alpha_{*} = (j_{*}, k_{*})$ is any element 
in $\Delta_P$ with $j_{*}\ge 1$.  
\end{proposition}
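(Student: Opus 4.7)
The plan is to use the $T^{\ast}T$ method combined with van der Corput's lemma. I would first observe that the kernel of ${\tilde T}_{\bf p}^{\ast}{\tilde T}_{\bf p}$ is
$$
L(y',y) \ = \ \varphi(y)\varphi(y') \, I(y' - y), \qquad I(a) \ = \ \int \psi_{\bf p}(u)\psi_{\bf p}(u-a) \, e^{i[P(u)-P(u-a)]}\, du,
$$
so Schur's test gives $\|{\tilde T}_{\bf p}\|_{L^2}^{2} \lesssim \int |I(a)|\, da$. The support condition on $\psi_{\bf p}$ forces $|a_j| \lesssim 2^{p_j}$, while $\varphi$ forces $|a_j| \lesssim 1$; combined with the crude sup bound $|I(a)| \lesssim 2^{-(p+q)}$ this immediately yields the trivial estimates $\|{\tilde T}_{\bf p}\|_{L^2} \lesssim 2^{-q/2}$ (for $p\leq 0$) and $\lesssim 2^{-(p+q)/2}$ (for $p\geq 0$), matching the prefactors in \eqref{L2-bound}.

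The remaining decay factor $|c_{\alpha_{\ast}} 2^{{\bf p}\cdot\alpha_{\ast}}|^{-\delta}$ (or $|c_{\alpha_{\ast}} 2^{p(j_{\ast}-1)+qk_{\ast}}|^{-\delta}$) is extracted from van der Corput. After rescaling $u = 2^{\bf p}\circ v$, $a = 2^{\bf p}\circ\tilde a$, one has $I(a) = 2^{-(p+q)} J(\tilde a)$ with
$$
J(\tilde a) \ = \ \int \tilde\psi(v)\tilde\psi(v-\tilde a) \, e^{i R_{\tilde a}(v)}\, dv, \qquad R_{\tilde a}(v) \ = \ \tilde P(v) - \tilde P(v-\tilde a),
$$
and $\tilde P(v) = \sum c_{\alpha} 2^{{\bf p}\cdot\alpha} v^{\alpha}$. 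The target reduces to the pointwise bound $|J(\tilde a)| \lesssim (|c_{\alpha_{\ast}}| 2^{{\bf p}\cdot\alpha_{\ast}} |\tilde a_1|)^{-\delta_{0}}$, which after being integrated against $d\tilde a$ over the rescaled region $|\tilde a_j|\lesssim \min(1,2^{-p_{j}})$ (splitting at $|\tilde a_1|\sim 1/(|c_{\alpha_{\ast}}|2^{{\bf p}\cdot\alpha_{\ast}})$) produces precisely the bounds stated in both cases, with $\delta = \delta_0/2$.

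The crux is producing a uniform lower bound on a $v$-derivative of $R_{\tilde a}$ of the form $\gtrsim |c_{\alpha_{\ast}}|2^{{\bf p}\cdot\alpha_{\ast}}|\tilde a_1|$; this is where the format described after Lemma \ref{UL-1} enters. Following that template, I take as auxiliary polynomial
$$
Q(v,\tilde a_1) \ := \ \tilde P(v) - \tilde P(v_1 - \tilde a_1, v_2),
$$
viewed as an element of $\mathbb R[v_1, v_2, \tilde a_1]$. A direct expansion identifies the monomial $v_1^{j_{\ast}-1} v_2^{k_{\ast}} \tilde a_1$ as appearing with the unique coefficient $j_{\ast}\, c_{\alpha_{\ast}}\, 2^{{\bf p}\cdot\alpha_{\ast}}$ (no other $\alpha\in\Delta_P$ can contribute to that trimonomial), so $\|Q\|_1 \geq j_{\ast}|c_{\alpha_{\ast}}|2^{{\bf p}\cdot\alpha_{\ast}}$. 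Lemma \ref{equiv-norms} applied to $Q$ in three variables then produces a multi-index $\beta=(\beta^{v},\beta_{3})$ with $|\partial^{\beta}Q|\gtrsim |c_{\alpha_{\ast}}|2^{{\bf p}\cdot\alpha_{\ast}}$ uniformly on a cube; since $Q|_{\tilde a_1=0}\equiv 0$, necessarily $\beta_3 \geq 1$, and the identity $\partial_{\tilde a_1}^{k} Q = (-1)^{k+1}(\partial_{1}^{k}\tilde P)(v_1-\tilde a_1,v_2)$ transfers this into a bound $|\partial^{\gamma}\tilde P(w)|\gtrsim |c_{\alpha_{\ast}}|2^{{\bf p}\cdot\alpha_{\ast}}$ on a slightly enlarged cube, with $\gamma_1\geq 1$.

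With $\gamma$ in hand, the fundamental theorem of calculus in the $v_1$-variable (combined with the fact that $\partial^{\gamma}\tilde P$ has a fixed sign on the cube) gives
$$
\partial_{v}^{\gamma - e_1}\bigl[\tilde P(v) - \tilde P(v_1-\tilde a_1,v_2)\bigr] \ = \ \int_{v_1-\tilde a_1}^{v_1} \partial^{\gamma}\tilde P(s,v_2)\, ds,
$$
whose modulus is $\gtrsim |c_{\alpha_{\ast}}|2^{{\bf p}\cdot\alpha_{\ast}}|\tilde a_1|$. A parallel FTC argument controls the residual $\tilde a_2$-piece $\tilde P(v_1-\tilde a_1,v_2)-\tilde P(v_1-\tilde a_1,v_2-\tilde a_2)$. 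I then apply Lemma \ref{UL-1} to $R_{\tilde a}$ with $\lambda=|c_{\alpha_{\ast}}|2^{{\bf p}\cdot\alpha_{\ast}}|\tilde a_1|$ to conclude $|J(\tilde a)|\lesssim\lambda^{-\delta_{0}}$. I expect the main obstacle to be precisely the handling of the residual $\tilde a_2$-piece: the naive triangle inequality replaces the desired lower bound by a difference of two terms, of which the second is controlled by $|\tilde a_2|$ times a derivative of $\tilde P$ whose size relative to $|c_{\alpha_{\ast}}|2^{{\bf p}\cdot\alpha_{\ast}}$ cannot a priori be bounded; resolving this will require either a case split on $|\tilde a_2|/|\tilde a_1|$ or an iterated application of the same auxiliary-polynomial device to the $\tilde a_2$-slice.
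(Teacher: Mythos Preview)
Your framework is the same as the paper's: compute the kernel of ${\tilde T}_{\bf p}^{*}{\tilde T}_{\bf p}$, rescale by $2^{\bf p}$, feed a derivative lower bound on the phase into Lemma~\ref{UL-1}, and finish with Schur. The divergence is exactly where you flag it, and the obstacle you name is a genuine gap rather than a technicality.

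Your intended pointwise target $|J(\tilde a)|\lesssim(|c_{\alpha_*}|2^{{\bf p}\cdot\alpha_*}|\tilde a_1|)^{-\delta_0}$ is in fact not what one can prove, and your scheme for reaching it---freeze $\tilde a_2$, run the three-variable auxiliary polynomial in $(v_1,v_2,\tilde a_1)$, then add back the $\tilde a_2$-slice---does not close. The derivative $\partial_v^{\gamma-e_1}$ you extract from the $\tilde a_1$-slice can be cancelled entirely by the same derivative of the $\tilde a_2$-slice, because the latter involves coefficients $c_\alpha$ with $\alpha\neq\alpha_*$ over which you have no control relative to $|c_{\alpha_*}|2^{{\bf p}\cdot\alpha_*}$. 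Neither a case split on $|\tilde a_2|/|\tilde a_1|$ nor an iteration in the $\tilde a_2$-direction repairs this without further input: the ratio that governs the competition is not $|\tilde a_2|/|\tilde a_1|$ but something like $|c_{j_*-1,k_*+1}|\,|\tilde a_2|\,/\,|c_{\alpha_*}|\,|\tilde a_1|$, and you have no handle on $c_{j_*-1,k_*+1}$.

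The paper's device avoids splitting the two coordinates. Write $g(t)=\tilde P\bigl(y - 2^{-{\bf p}}\circ x + t\,2^{-{\bf p}}\circ(x-u)\bigr)$ so that $\Phi(y)=g(1)-g(0)=\int_0^1 g'(t)\,dt$. The integrand $g'(t)$ is a polynomial $Q(X,Y)=\sum_{(j,k)\in\tilde\Delta} d_{j,k}X^jY^k$ in the variables $X,Y$ (which are affine translates of $y_1,y_2$), with coefficients
\[
d_{j,k}=2^{pj+qk}\bigl[(j+1)c_{j+1,k}(x_1-u_1)+(k+1)c_{j,k+1}(x_2-u_2)\bigr].
\]
Now apply Lemma~\ref{equiv-norms} to this \emph{two}-variable $Q$: some $\partial^{\alpha}Q$ is single-signed and $\gtrsim\|Q\|_1$ on the relevant cube, and because $X,Y$ are translates of $y_1,y_2$ independent of $t$, one may pass $\partial_y^{\alpha}$ through $\int_0^1$ to get $|\partial_y^{\alpha}\Phi|\gtrsim\|Q\|_1$. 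The point is that
\[
\|Q\|_1\ \ge\ |d_{j_*-1,k_*}|\ =\ |c_{\alpha_*}|\,2^{p(j_*-1)+qk_*}\,\bigl|\,x_1-u_1+B(x_2,u_2)\,\bigr|
\]
for some $B$ depending only on $x_2,u_2$ and the coefficients of $P$. The $\tilde a_2$-contribution is thus absorbed as a \emph{shift} of $x_1-u_1$, not as a competing term. Lemma~\ref{UL-1} then gives
\[
|L(x,u)|\ \lesssim\ 2^{-(p+q)}\bigl(|c_{\alpha_*}|\,2^{p(j_*-1)+qk_*}\,|x_1-u_1+B(x_2,u_2)|\bigr)^{-\delta},
\]
and since $\int_{|x|\le 3}|x_1-u_1+B(x_2,u_2)|^{-\delta}\,dx\lesssim 1$ uniformly in $u$ (the shift is harmless under integration), Schur finishes. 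For $p\le 0$ one additionally uses the support restriction $|x_1-u_1|\lesssim 2^p$ and rescales $x_1\mapsto 2^{-p}(x_1-u_1)$ before integrating, which converts the extra $2^{-p}$ into the stated $2^{pj_*}$.

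So the correct kernel bound carries $|x_1-u_1+B(x_2,u_2)|^{-\delta}$, not $|x_1-u_1|^{-\delta}$; the two integrate to the same thing, but only the former is actually true pointwise.
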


\begin{proof} The kernel $L$ of ${\tilde T}^{*}_{\bf p} {\tilde T}_{\bf p}$ is
$$
L(x,u) \ = \ \varphi(x)\varphi(u) \int_{{\mathbb R}^2} e^{i [P(y - x) - P(y-u)]} \psi_{\bf p}(y-x) \psi_{\bf p}(y-u) \, dy.
$$
Note that $L$ is supported in $[-3, 3]^4$ and if $p\le 0$, $L$ is further supported when
$|x_1 - u_1| \le 2^{p}$.
We make the change of variables $y \to 2^{\bf p}\circ y$ to conclude
$$
L(x,u) \ = \ \varphi(x)\varphi(u) 2^{-|{\bf p}|} \int_{{\mathbb R}^2} e^{i \Phi(y)} \, 
\Theta(y) \, dy.
$$
where 
$$
\Phi(y) = \Phi_{{\bf p}, x, u}(y) = \sum c_{\alpha} 2^{{\bf p}\cdot \alpha} 
\bigl[ (y - 2^{-{\bf p}}\circ x)^{\alpha} - (y - 2^{-{\bf p}}\circ u)^{\alpha} \bigr]
$$
and
$$
\Theta(y) = \Theta_{{\bf p}, x,u} (y) =  \frac{\phi(y_1 - 2^{-p} x_1)}{y_1 - 2^{-p} x_1}
\frac{\phi(y_1 - 2^{-p} u_1)}{y_1 - 2^{-p} u_1}
 \frac{\phi(y_2 - 2^{-q} x_2)}{y_2 - 2^{-q} x_2}
\frac{\phi(y_2 - 2^{-q} u_2)}{y_2 - 2^{-q} u_2}
$$
is a smooth function, supported in $[-5,5]^2$ with uniformly bounded $C^k$ norms. Let
$$
g(t) \ := \ \sum c_{\alpha} 2^{{\bf p}\cdot \alpha} (y - 2^{-{\bf p}}\circ x + t 2^{-{\bf p}}
\circ (x - u))^{\alpha} 
$$
and note that $\Phi(y) = g(1) - g(0) = \int_0^1 g'(t) dt$. Writing 
$$
X \ := \  y_1 - 2^{-{p}}x_1 + t 2^{-{p}} (x_1 - u_1) \ = \ 
(1-t) (y_1 - 2^{-p} x_1) + t (y_1 - 2^{-p} u_1),
$$
and similarly for $Y$, 
we see that $X, Y \in [-10,10]^2$ for all $t\in [0,1]$. Also $\Phi(y) =$
$$
\int_0^1 \Bigl[\sum_{\alpha \in \Delta_P} c_{\alpha} 2^{{\bf p}\cdot \alpha} 
j 2^{-p} (x_1 - u_1) X^{j -1} Y^k + \sum_{\alpha \in \Delta_P} c_{\alpha} 2^{{\bf p}\cdot \alpha} 
k 2^{-q} (x_2 - u_2) X^{j} Y^{k-1}
 \Bigr] \, dt
$$
$$
= \ \int_0^1 \sum_{(j,k) \in {\tilde \Delta}_P} 2^{pj + q k} \Bigl[ 
c_{j+1,k} 
(j +1) (x_1 - u_1) + c_{j,k+1} (k +1) (x_2 - u_2)
 \bigr] X^j Y^k \Bigr] \, dt
$$
where ${\tilde \Delta}_P =  (\Delta_P - (1,0))\cup(\Delta_P - (0,1))$. We note that
${\tilde \Delta}_P \subseteq {\tilde \Delta}$ where ${\tilde \Delta} = (\Delta - (1,0))\cup (\Delta - (0,1))$
and every $(j,k) \in {\tilde \Delta}$  satisfies $j+k \ge 1$. We now apply Lemma \ref{equiv-norms}
to $Q(X,Y) = \sum_{(j,k) \in {\tilde \Delta}_P} d_{j,k} X^j Y^k$ where
$$
d_{j,k} \ = \  2^{p j + q k} [c_{j+1,k}  (j +1) (x_1 - u_1) + c_{j,k+1} (k+1) (x_2 - u_2)]
$$
and ${\tilde \Delta}$ to find a derivative $\alpha = (j,k)\in {\tilde \Delta}$ such that 
$|\partial^{\alpha} Q(X,Y)| \gtrsim \|Q\|_1$ for $(X,Y) \in [-10,10]^2$. 

Hence $\partial^{\alpha} Q(X,Y)$ is single-signed on $[-10, 10]^2$ and so
$$
\bigl| \partial^{\alpha}_y \Phi(y) \bigr| = \int_0^1 |\partial^{\alpha}_{X,Y} Q(X,Y)| \, dt \ \gtrsim
\ \|Q\|_1
$$
holds for all $y = (y_1, y_2)$ in the support of $\Theta$. Here we used the fact that $X$ and $Y$ are  translates of $y_1$ and $y_2$; $X = y_1 + B_1, Y = y_2 + B_2$ for some $B_1, B_2$.  

Using the fact that $j_{*}\ge 1$, we see that $\|Q\|_1 \ge |d_{j_{*} - 1, k_{*}}| = $
$$
2^{ p (j_{*} - 1) + q k_{*}} \,
\bigl| c_{j_{*}, k_{*}}  j_{*} 
(x_1 - u_1) +  c_{j_{*} - 1, k_{*} + 1} 
(k_{*} + 1) (x_2 - u_2) \bigr|
$$
$$
\ge  \ \ | c_{j_{*}, k_{*}}| 2^{p (j_{*} - 1) + q k_{*}} \bigl| x_1 - u_1 + B(x_2, u_2) \bigr|
$$
where $B(x_2,u_2)$ depends only on $x_2, u_2$ and the coefficients of $P$. 

We now apply Lemma \ref{UL-1} to $\Phi$ and $\lambda = \|Q\|_1$ to deduce the existence
of a $\delta = \delta(\Delta)$ with $0<\delta < 1$ such that 
\begin{equation}\label{L}
|L(x,u)| \ \lesssim \ 2^{-(p+q)} \,
 |c_{j_{*}, k_{*}} 2^{p (j_{*} - 1) + q k_{*}} (x_1 - u_1 + B(x_2, u_2) )|^{-\delta} .
\end{equation}
Since $\int_{|x |\le 3} |x_1 - u_1 + B(x_2, u_2)|^{-\delta} dx \le C_{\delta}$, we have
$$
\sup_u  \int |L(x,u)| \, dx \ \lesssim \ 2^{-(p+q)} 
\, |c_{j_{*}, k_{*}} 2^{p (j_{*} - 1) + q k_{*}}|^{-\delta}.
$$
Similarly $\sup_x  \int | L(x,u) | du \lesssim  
2^{-(p+q)} |c_{j_{*}, k_{*}} 2^{p (j_{*} - 1) + q k_{*}}|^{-\delta}$
and hence an application of Schur's lemma shows
$$
\|{\tilde T}^{*}_{\bf p} {\tilde T}_{\bf p} \|_{L^2} \ \lesssim \ 
 2^{-(p+q)} |c_{j_{*}, k_{*}} 2^{p (j_{*} - 1) + q k_{*}}|^{-\delta},
 $$
 implying
 $$  
 \|{\tilde T}_{\bf p}\|_{L^2} \ \lesssim \ \ 2^{-(p+q)/2} 
|c_{j_{*}, k_{*}} 2^{p (j_{*} - 1) + q k_{*}}|^{-\delta/2}
 $$ 
 which proves the Proposition for the $p\ge 0$ case.
 
 When $p\le 0$, we use the fact that $L(x,u)$ is supported in $E$ where
 $E = \{(x,u) \in [-3,3]^4: |x_1 - u_1| \le 2^p \}$. Using the bound
 \eqref{L} for $L(x,u)$, integrating over $E$ and making the change of variables
 $x_1 \to 2^{-p} (x_1 - u_1)$ we have 
 $$
 \int_{E} |L(x,u)| \, dx \ \lesssim \ 2^{-(p+q)} 2^{p}
 |c_{j_{*}, k_{*}} 2^{p (j_{*} - 1) + q k_{*}}|^{-\delta}
  \int_{|x|\le 3}  |2^{p} x_1  +  B(x_2, u_2) |^{-\delta} dx .
  $$
 Since
$$
   \int_{|x|\le 3}  |2^{p} x_1  +  B(x_2, u_2) |^{-\delta} dx = 
 2^{-\delta p} \int_{|x|\le 3}  |x_1  +  2^{-p} B(x_2, u_2) |^{-\delta} dx  \lesssim 
 2^{-\delta p},
$$
we have
$$
\int |L(x,u)| \, dx \lesssim 2^{-q} 2^{-\delta p}
 |c_{j_{*}, k_{*}} 2^{p (j_{*} - 1) + q k_{*}}|^{-\delta} \ = \
 2^{-q} |c_{j_{*}, k_{*}} 2^{p j_{*} + q k_{*}}|^{-\delta}.
 $$
As above, this leads to the bound
$\|{\tilde T}^{*}_{\bf p} {\tilde T}_{\bf p} \|_{L^2} \lesssim  
 2^{-q} |c_{j_{*}, k_{*}} 2^{p j_{*} + q k_{*}}|^{-\delta}$ and  hence
 $$  
 \|{\tilde T}_{\bf p}\|_{L^2} \ \lesssim \ \ 2^{-q/2} \,
|c_{j_{*}, k_{*}} 2^{p j_{*} + q k_{*}}|^{-\delta/2}
 $$ 
 which finishes the proof of the Proposition.
\end{proof}

We end this section with a final useful lemma.

\begin{lemma}\label{UL-2} Let ${\mathcal P}_d$ be the collection of real polynomials of a single variable
of degree at most $d$,
and let ${\mathcal G} \subset {\mathbb Z}$ be a finite set of integers. Then
$$
C_d \ := \ 
\sup_{Q\in {\mathcal P}_d, {\mathcal G}} \
\Bigl|\sum_{p\in {\mathcal G}} \ \int_{\mathbb R} \psi_p(s) \, e^{i Q(s)} \, ds \ \Bigr| 
$$
is finite.
\end{lemma}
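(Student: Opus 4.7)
My plan is to establish the stronger statement that $\sum_{p \in \mathcal{G}} |\Phi_p| \le C_d$ uniformly, where $\Phi_p := \int_{\mathbb{R}} \psi_p(s)\, e^{iQ(s)} \, ds$. After factoring out the unimodular constant $e^{iQ(0)}$ I may assume $Q(s) = \sum_{k=1}^d a_k s^k$, so that $Q \in \mathcal{V}_\Delta$ with $\Delta = \{1, \dots, d\}$. The rescaling $s = 2^p t$ transforms $\Phi_p$ into
\[
\Phi_p \ = \ \int_{\mathbb{R}} \frac{\phi(t)}{t}\, e^{i R_p(t)}\, dt, \qquad R_p(t) \ := \ \sum_{k=1}^d a_k 2^{pk}\, t^k,
\]
with $\phi(t)/t$ a fixed odd smooth function of bounded support away from the origin.

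The core estimate will be $|\Phi_p| \lesssim \min(N_p, N_p^{-\delta})$, where $N_p := \|R_p\|_1 = \sum_k |a_k| 2^{pk}$ and $\delta = \delta(d) \in (0,1)$. For the first bound I use the oddness of $\phi(t)/t$: since $\int (\phi(t)/t)\, dt = 0$, one may write $\Phi_p = \int (\phi(t)/t)\bigl(e^{iR_p(t)} - 1\bigr) dt$, and the trivial estimate $|R_p(t)| \lesssim N_p$ on the support of $\phi$ yields $|\Phi_p| \lesssim N_p$. For the second bound I invoke Lemma \ref{equiv-norms} with $\Delta = \{1,\dots,d\}$ to locate a derivative $\partial^\alpha R_p$ bounded below by $c\, N_p$ on the support of $\phi$; Lemma \ref{UL-1} then supplies $|\Phi_p| \lesssim N_p^{-\delta}$ with both $\delta$ and the implicit constant depending only on $d$. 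This is exactly the "nonstandard" template described at the end of Section \ref{prelims}.

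The summation is clean once one exploits the monotonicity of $N_p$ in $p$: since every exponent appearing in $R_p$ satisfies $k \ge 1$, we have $N_{p+1} \ge 2 N_p$. Let $p_\star$ be the largest integer with $N_{p_\star} \le 1$. The first bound and the doubling give
\[
\sum_{p \le p_\star} |\Phi_p| \ \lesssim \ \sum_{p \le p_\star} N_p \ \le \ 2 N_{p_\star} \ \le \ 2,
\]
while the same doubling forces $N_p \ge 2^{\,p - p_\star - 1}$ on the upper range, so that $\sum_{p > p_\star} N_p^{-\delta}$ is dominated by the convergent geometric series $\sum_{j \ge 1} 2^{-(j-1)\delta}$. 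Adding the two ranges and noting that $\mathcal{G}$ is arbitrary gives the uniform bound and hence the finiteness of $C_d$.

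The main technical point is the uniform van der Corput estimate in the second range: a direct appeal to the classical multidimensional van der Corput lemma applied to the rescaled phase $R_p/N_p$ would demand uniform control of its $C^k$ norms, which is not available for general $Q \in \mathcal{P}_d$. Lemmas \ref{equiv-norms} and \ref{UL-1} are tailored to circumvent this obstruction, so once they are in hand the remainder of the argument is routine geometric summation.
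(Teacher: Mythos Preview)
Your argument is correct. The paper does not actually prove Lemma~\ref{UL-2}; it simply records it as well known and refers to \cite{S}, Chapter~XI, page~513. What you have written is essentially the standard proof one finds there (or in Ricci--Stein): rescale each term to unit scale, use the cancellation of the odd cutoff $\phi(t)/t$ to get the bound $|\Phi_p|\lesssim N_p$ when $N_p$ is small, use van der Corput to get decay $|\Phi_p|\lesssim N_p^{-\delta}$ when $N_p$ is large, and sum geometrically thanks to the doubling $N_{p+1}\ge 2N_p$. Using the paper's Lemmas~\ref{equiv-norms} and~\ref{UL-1} for the decay step is a perfectly good substitute for the classical one-dimensional van der Corput bound, and it keeps the argument self-contained within the paper.

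One small remark on your final paragraph: in this particular one-dimensional situation the worry you raise is unfounded. The rescaled phase $R_p/N_p$ has coefficients $a_k 2^{pk}/N_p$, each bounded in modulus by $1$, so its $C^k$ norms on the support of $\phi$ \emph{are} uniformly controlled, and the classical van der Corput lemma applies directly. The genuinely ``nonstandard'' use of Lemmas~\ref{equiv-norms} and~\ref{UL-1} described after Lemma~\ref{UL-1} refers to situations (such as Proposition~\ref{T*T}) where the lower bound $\lambda$ is not comparable to $\|\Phi\|_1$; here it is. This does not affect the validity of your proof, only the commentary.
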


This is a well known result; see for example \cite{S}, Chapter XI, page 513.

\section{Proof of Theorem \ref{2-d} -- Prelude to the sufficiency part}\label{prelude}
As stated in the previous section, we will
establish a more robust version of Theorem \ref{2-d} by showing that the
uniform bound \eqref{main-bound} holds for $T_{\mathcal F}$ where ${\mathcal F}$ is any finite subset 
of ${\mathbb N}_0^2$. Without loss of generality we may take the elements
${\bf p} = (p,q) \in {\mathcal F}$ to be ordered, say $p\le q$. 
Furthermore, since we are proving
$L^1$ bounds away from the
unit square, for $|x|\ge \gamma$, it suffices to consider a finite ${\mathcal F}$ 
with every ${\bf p} = (p, q)  \in {\mathcal F}$ satisfying $p\le q$
{\it and} $q \ge c_{\gamma} \gg 1$. For such ${\mathcal F}$, we see
that for any atom $a$ supported in the unit square,
$T_{\bf p}a(x)$ with ${\bf p}\in {\mathcal F}$ is automatically supported
where $|x|\ge \gamma$ and so it suffices to prove
\begin{equation}\label{reduction}
\int |T_{\mathcal F} a (x)| \, dx \ \lesssim \ 1,
\end{equation}
uniformly for all atoms $a$ supported in the unit square and all such ${\mathcal F}$
described above.

We decompose such an ${\mathcal F}$ into $O_{|\Delta|}(1)$ disjoint sets
such that
\begin{equation}\label{king}
|c_{\alpha_0}| \ 2^{{\bf p}\cdot \alpha_0} \ \ge \ |c_{\alpha}| \ 2^{{\bf p}\cdot \alpha}
\end{equation}
holds for some $\alpha_0 \in \Delta$ and all $\alpha \in \Delta$. It suffices to consider a fixed 
subset ${\mathcal F}_0$ where \eqref{king} holds, say for $\alpha_0 = (j_0, k_0) \in \Delta$, and establish
\eqref{reduction} with ${\mathcal F}$ replaced by ${\mathcal F}_0$. 

\subsection*{The case $j_0\ge 1$}
First we will consider the case $j_0 \ge 1$. In this case, for
${\bf p} \in {\mathcal F}_0$, we consider the difference operator
$D_{\bf p} = T_{\bf p} - S_{\bf p}$ where
$$
S_{\bf p} f(x) \ := \ \int_{{\mathbb R}^2} \psi_{\bf p}(x-y) e^{i P(x_1 - y_1, x_2)} \, f(y) \, dy .
$$
For ${\bf p} \in {\mathcal F}_0$, $y_2\in [-1,1]$ and 
$|x_2 - y_2| \sim 2^{q}$, we have $|x_2| \sim 2^{q}$ 
since $q \ge c_{\gamma} \gg 1$. Hence $|(x_2 - y_2)^k - x_2^k| \lesssim 2^{q(k-1)}$,
implying $|P(x_1 - y_1, x_2 - y_2) - P(x_1 - y_1, x_2)| \lesssim |c_{j_0, k_0}| 2^{p j_0 + q (k_0 -1)}$
whenever $\psi_{\bf p}(x - y) \not= 0$ and ${\bf p} = (p,q) \in {\mathcal F}_0$.
Therefore we have
\begin{equation}\label{D-diff}
\|D_{\bf p} a \|_{L^1} \ \lesssim \ |c_{j_0, k_0}| \, 2^{p j_0 + q (k_0 - 1)}
\end{equation}
for any ${\bf p} = (p,q) \in {\mathcal F}_0$ and all atoms $a$ supported in the unit square.

To complement the estimate \eqref{D-diff}, we will observe that the corresponding operator
$$
{\tilde S}_{\bf p} f(x) \ := \ \int_{{\mathbb R}^2} \psi_{\bf p}(x - y) e^{i P(x_1 - y_1, x_2)} 
\varphi(y) f(y) \, dy
$$
for $S_{\bf p}$ satisfies the same $L^2$ operator norm bound as ${\tilde T}_{\bf p}$; namely
\begin{proposition}\label{S*S} Let ${\bf p} = (p, q) \in {\mathbb Z}^2$ be ordered, 
$p\le q$. Then for some $0 < \delta < 1$,
\begin{equation}\label{L2-bound}
\|{\tilde S}_{\bf p}\|_{L^2} \ \lesssim \ 
\begin{cases}
2^{- q/2} \ \ \ |c_{\alpha_{*}} \, 2^{p j_{*} + q k_{*}} |^{-\delta} & \mbox{\rm if} \ p\le 0 \\
2^{-(p+q)/2} \ |c_{\alpha_{*}} \, 2^{p(j_{*} -1) + q k_{*}} |^{-\delta} & \mbox{\rm if} \ p\ge 0 
\end{cases}
\end{equation}
where $P(x) = \sum c_{\alpha} x^{\alpha}$ and  $\alpha_{*} = (j_{*}, k_{*})$ is any element 
in $\Delta_P$ with $j_{*}\ge 1$.  
\end{proposition}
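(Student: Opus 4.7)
The plan is to mirror the $T^*T$ argument from Proposition \ref{T*T} essentially verbatim, exploiting one crucial simplification: in the phase $P(x_1 - y_1, x_2)$ the second coordinate enters as a plain parameter rather than as a difference. Concretely, the kernel of $\tilde{S}_{\bf p}^* \tilde{S}_{\bf p}$ is
$$L_S(x, u) \ = \ \varphi(x)\varphi(u)\int \psi_{\bf p}(y - x)\psi_{\bf p}(y - u)\, e^{i[P(y_1 - u_1,\, y_2) - P(y_1 - x_1,\, y_2)]}\, dy,$$
so the phase depends on $y_2$ only through monomials $y_2^k$, not through the differences $y_2 - x_2,\ y_2 - u_2$ that appeared in Proposition \ref{T*T}.

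After the same rescaling $y \to 2^{\bf p}\circ y$ the new phase is
$$\Phi_S(y) \ = \ \sum_{(j,k)\in\Delta_P} c_{j,k}\, 2^{pj + qk}\, y_2^k\Bigl[(y_1 - 2^{-p} u_1)^j - (y_1 - 2^{-p} x_1)^j\Bigr],$$
and the Newton--Leibniz trick $g(1) - g(0) = \int_0^1 g'(t)\, dt$ rewrites this as $\Phi_S(y) = (x_1 - u_1)\int_0^1 Q_S(X, y_2)\, dt$, where $X = (1 - t)(y_1 - 2^{-p} x_1) + t(y_1 - 2^{-p} u_1)$ and the auxiliary polynomial $Q_S(X, y_2) = \sum d_{j, k}\, X^j\, y_2^k$ has coefficients $d_{j, k} = (j + 1)\, c_{j + 1, k}\, 2^{pj + qk}$ and monomial support in $(\Delta - (1, 0))\cap\mathbb{N}_0^2$. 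Condition (a) of \eqref{nec-suff} supplies $(1, 0)\notin\Delta$, hence $\vec{0}\notin(\Delta - (1, 0))\cap\mathbb{N}_0^2$, so Lemma \ref{equiv-norms} applies and produces a multi-index $\alpha$ with $|\partial^\alpha Q_S(X, y_2)| \gtrsim \|Q_S\|_1$ throughout the bounded region on which $\Theta$ is supported. This transfers to $|\partial_y^\alpha \Phi_S(y)| \gtrsim |x_1 - u_1|\, \|Q_S\|_1 \gtrsim |x_1 - u_1|\, |c_{j_*, k_*}|\, 2^{p(j_* - 1) + q k_*}$, and Lemma \ref{UL-1} (with $\lambda = |x_1 - u_1|\,\|Q_S\|_1$) yields
$$|L_S(x, u)| \ \lesssim \ 2^{-(p + q)}\, \bigl[|x_1 - u_1|\, |c_{j_*, k_*}|\, 2^{p(j_* - 1) + q k_*}\bigr]^{-\delta}.$$

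The last step is Schur. When $p \geq 0$, the support of $\varphi$ bounds $|x_1| \leq 3$ and $\int |x_1 - u_1|^{-\delta}\, dx_1 \lesssim 1$ (since $\delta < 1$), giving the $p \geq 0$ case directly. When $p \leq 0$, the support constraint $|x_1 - u_1| \lesssim 2^p$ inherited from $\psi_{\bf p}$ together with the rescaling $x_1 \mapsto 2^{-p}(x_1 - u_1)$ produces an extra $2^{p(1 - \delta)}$, whose algebra collapses to the cleaner exponent $|c_{j_*, k_*}\, 2^{pj_* + qk_*}|^{-\delta}$ stated in the proposition. Because $\Phi_S$ has no $y_2$-difference, no analogue of the correction term $B(x_2, u_2)$ from Proposition \ref{T*T} arises and the $x_2$- (or $u_2$-) integration in Schur's lemma is trivially bounded by the support of $\varphi$. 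Consequently the argument is a faithful adaptation of Proposition \ref{T*T}, and I do not anticipate any substantive obstacle.
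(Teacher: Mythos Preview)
Your proposal is correct and follows the paper's own proof essentially line for line: both run the $T^*T$ argument of Proposition~\ref{T*T}, rescale, use the Newton--Leibniz representation to pass to an auxiliary polynomial with support in $\tilde\Delta_P = \Delta_P - (1,0)$ (now only the $(1,0)$-shift survives because the second variable enters as $y_2$ rather than $y_2 - x_2$), invoke Lemma~\ref{equiv-norms} and Lemma~\ref{UL-1}, and finish with Schur together with the $|x_1-u_1|\lesssim 2^p$ support restriction in the $p\le 0$ case. Your observation that the correction term $B(x_2,u_2)$ disappears is exactly the simplification the paper exploits, and your explicit invocation of condition~(a) to guarantee $\vec 0\notin\tilde\Delta_P$ is the same ambient hypothesis the paper uses implicitly.
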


We now deompose ${\mathcal F}_0$ further into a disjoint union 
${\mathcal F}_0 =  {\mathcal F}_{0,+} \cup {\mathcal F}_{0,-}$ where for 
${\bf p} = (p,q) \in {\mathcal F}_{0,+}$, we have $p\ge 0$ and for
${\bf p} = (p,q) \in {\mathcal F}_{0,-}$, we have $p < 0$.
Hence for ${\bf p}\in {\mathcal F}_{0,+}$, $D_{\bf p} a(x)$ is supported in
$$
\bigl\{x = (x_1, x_2): \, |x_1| \sim 2^p \ {\rm and} \ |x_2| \sim 2^{q} \bigr\} 
$$
when $a$ is an atom supported in the unit square. Hence by the Cauchy-Schwarz inequality,
$\|D_{\bf p} a \|_{L^1} \lesssim 2^{(p+q)/2} \|D_{\bf p} a \|_{L^2}$ for ${\bf p} \in {\mathcal F}_{0,+}$.
Also when ${\bf p}\in {\mathcal F}_{0,-}$, $D_{\bf p} a(x)$ is supported in
$$
\bigl\{x = (x_1, x_2): \, |x_1| \lesssim 1 \ {\rm and} \ |x_2| \sim 2^{q} \bigr\} 
$$
and so
$\|D_{\bf p} a \|_{L^1} \lesssim 2^{q/2} \|D_{\bf p} a \|_{L^2}$ for ${\bf p} \in {\mathcal F}_{0,-}$.
Hence applying Propositions \ref{T*T} and \ref{S*S} to the operators $T_{\bf p}$ and
$S_{\bf p}$ separately (recall that we are assuming $j_0 \ge 1$ for the moment) shows us that
\begin{equation}\label{D-decay}
\|D_{\bf p} a\|_{L^1} \ \lesssim \ 
\begin{cases}
|c_{j_0, k_0} 2^{p j_0 + q k_0} |^{-\delta} & \mbox{\rm if} \ p\le 0 \\
|c_{j_0, k_0} 2^{p (j_0 - 1) + q k_0} |^{-\delta} & \mbox{\rm if} \ p\ge 0 
\end{cases}
\end{equation}
which, together with \eqref{D-diff} allows us to successfully sum $\|D_{\bf p} a\|_{L^1}$
over ${\bf p} \in {\mathcal F}_0 = {\mathcal F}_{0,+} \cup {\mathcal F}_{0,-}$. 

To see this, let us treat the cases ${\bf p} \in {\mathcal F}_{0,+}$ and
${\bf p} \in {\mathcal F}_{0,-}$ separately. When ${\bf p} = (p,q) \in {\mathcal F}_{0,+}$, we take
a convex combination of the bounds in \eqref{D-diff} and \eqref{D-decay}; for any $0<\epsilon<1$,
we have 
$$
\|D_{\bf p} a \|_{L^1} \ \lesssim \ 
\frac{|c_{j_0, k_0} 2^{p j_0 + q (k_0 - 1)} |^{\epsilon} }{|c_{j_0, k_0} 
2^{p (j_0 -1) + q k_0}|^{\delta (1-\epsilon)}} .
$$
We choose $\epsilon$ such that
\begin{equation}\label{eps-condition}
\frac{\epsilon}{1-\epsilon} \  > \ \delta \
\frac{j_0 -1}{j_0} \ \ {\rm or} \ \ \epsilon j_0 \ > \delta (1 - \epsilon) (j_0 -1)
\end{equation}
This allows us, for fixed $q$, to sum in $p\le q$ to conclude
$$
\sum_{p \in {\mathcal F}_{0,+}^{q}} \|D_{\bf p} a \|_{L^1} \ \lesssim \
\bigl[ |c_{j_0, k_0}| 2^{(j_0 + k_0  - 1) q} \bigr]^{\epsilon - \delta (1-\epsilon)}
$$
where ${\mathcal F}_{0,+}^{q} = \{ p \in {\mathbb Z} : (p, q) \in {\mathcal F}_{0,+} \}$.

Finally, for $q \ge c_{\gamma} \gg 1$, we split this sum further; when
$|c_{j_0, k_0}| 2^{(j_0 + k_0 -1) q} \le 1$, we choose $\epsilon$
so that $\epsilon/(1-\epsilon) > \delta$ (which implies the condition \eqref{eps-condition})
and this allows us to sum over $q \ge 0$ to obtain an $O(1)$ bound. When 
$|c_{j_0, k_0}| 2^{(j_0 + k_0 -1) q} \ge 1$, we further restrict $\epsilon$
so that $\epsilon/(1-\epsilon) < \delta$. We note that it is possible to choose $\epsilon$
so that
$$
\delta \
\frac{j_0 -1}{j_0} \ < \
\frac{\epsilon}{1-\epsilon} \  < \ \delta
$$
and with this choice, we can successfully sum over these $q \ge 0$ and hence
\begin{equation}\label{D-sum-+}
\sum_{{\bf p}\in {\mathcal F}_{0,+}} \|D_{\bf p} a \|_{L^1} \ \lesssim 1.
\end{equation}

We now turn to bounding the sum  $\sum_{{\bf p} \in {\mathcal F}_{0,-}} \|D_{\bf p} a\|_{L^1}$.
Again we take
a convex combination of the bounds in \eqref{D-diff} and \eqref{D-decay}; for any $0<\epsilon<1$,
we have 
$$
\|D_{\bf p} a \|_{L^1} \ \lesssim \ 
\frac{|c_{j_0, k_0} 2^{p j_0 + q (k_0 - 1)} |^{\epsilon} }{|c_{j_0, k_0} 
2^{p j_0 + q k_0}|^{\delta (1-\epsilon)}} \ = \
\bigl[|c_{j_0, k_0}| 2^{p j_0 + q k_0} \bigr]^{\epsilon - \delta (1-\epsilon)} \, 
2^{-\epsilon q} 
$$
for any ${\bf p} = (p,q) \in {\mathcal F}_{0,-}$. Again we will fix $q$ and sum
over $p\in {\mathcal F}_{0,-}^q$ first. For those $p$ such that
$|c_{j_0, k_0}| 2^{p j_0 + q k_0} \le 1$, we choose $\epsilon$ such that
$\epsilon - \delta (1 - \epsilon) >0$ and for those $p$ such that
$|c_{j_0, k_0}| 2^{p j_0 + q k_0} \ge 1$, we choose $\epsilon$ such that
$\epsilon - \delta (1 - \epsilon) < 0$. In either case we see that
$$
\sum_{p \in {\mathcal F}_{0,-}^{q}} \|D_{\bf p} a \|_{L^1} \ \lesssim \
2^{-\epsilon q}
$$
and so
\begin{equation}\label{D-sum--}
\sum_{{\bf p}\in {\mathcal F}_{0,-}} \|D_{\bf p} a \|_{L^1} \ \lesssim 1.
\end{equation}
The bounds \eqref{D-sum-+} and \eqref{D-sum--} reduce matters (modulo Proposition
\ref{S*S}) to examining
$\| \sum_{{\bf p}\in {\mathcal F}_0} S_{\bf p} a \|_{L^1}$ in the case $j_0 \ge 1$. 

We first consider those ${\bf p} = (p,q) \in {\mathcal F}_{0,+}$ and note that
$$
S_{\bf p} a(x) \ = \ \int_{{\mathbb R}^2} \bigl[ \psi_q (x_2 - y_2) - \psi_q (x_2) \bigr]
\psi_p (x_1 - y_1) e^{i P(x_1 - y_1, x_2)} a(y) \,  dy
$$
by the cancellation property of the atom $a$. Since $|x_2| \sim |x_2 - y_2| \sim 2^q$ when
$\psi_q (x_2 - y_2) \not= 0$ and $y_2 \in [-1,1]$, we have
$$
|S_{\bf p} a(x)| \ \lesssim \ 2^{-2q}\chi_{|x_2| \sim 2^q}(x_2) \int_{{\mathbb R}^2}  
| \psi_p (x_1 - y_1) a(y) | \, dy
$$
and so $\|S_{\bf p} a\|_{L^1} \lesssim 2^{-q}$ implying that
$$
\sum_{{\bf p} = (p,q) \in {\mathcal F}_{0,+}} \|S_{\bf p} a \|_{L^1} \ \lesssim \ 1.
$$
For ${\bf p} = (p,q) \in {\mathcal F}_{0,-}$, we again use the cancellation property
of the atom $a$ to write
$$
\sum_{{\bf p} \in {\mathcal F}_{0,-}} S_{\bf p} a (x) \ = \ \sum_{q\ge 0}
 \int_{{\mathbb R}} \bigl[ \psi_q (x_2 - y_2) - \psi_q (x_2) \bigr] S^{x_2}_q a_{y_2} (x_1) d y_2
 $$
 where $a_{y_2}(u) = a(u, y_2)$ and
 $$
 S^{x_2}_q g (x_1) \ := \ \int_{\mathbb R} 
 \bigl[ \sum_{p: (p,q) \in {\mathcal F}_{0,-}} \psi_p (x_1 - y_1) \bigr]
e^{i P(x_1 - y_1, x_2)} g(y_1) \, dy_1.
$$
The operator $S^{x_2}_q$ is a multiplier operator on ${\mathbb R}$ with multiplier
$$
m^{x_2}_q(\xi) \ = \ \sum_{p: (p,q) \in {\mathcal F}_{0,-}} 
\int_{\mathbb R} \psi_p(s) \ e^{i (P(s, x_2) + \xi s)} \, ds
$$
which by Lemma \ref{UL-2} is a bounded function of $\xi$, uniformly in the parameters
$x_2, q$ and the set ${\mathcal G}_q = \{ p: (p,q) \in {\mathcal F}_{0,-}\}$. Hence $S^{x_2}_q$
is uniformly bounded on $L^2$.

For fixed $x_2$ and $q$,  $S^{x_2}_q a_{x_2} (\cdot)$ is supported in $[-3,3]$ and so
by the Cauchy-Schwarz inequality,
$$
\bigl\|\sum_{{\bf p} \in {\mathcal F}_{0,-}} S_{\bf p} a \bigr\|_{L^1} \ \lesssim \
\sum_{q\ge 0} 2^{-2q} \int_{|x_2| \sim 2^q} \Bigl[\int_{\mathbb R} \| S^{x_2}_q a_{y_2} \|_{L^2} 
dy_2\Bigr] \, dx_2 
$$
$$
\lesssim \ \int_{\mathbb R} \sqrt{\int_{\mathbb R} |a(y_1, y_2)|^2 \, dy_1} \ \ dy_2 \ \lesssim \ 1,
$$
the last inequality following by a final application of the Cauchy-Schwarz inequality. The completes
the proof of Theorem \ref{2-d} in the case $j_0\ge 1$, once Proposition \ref{S*S} is proved.

{\bf Proof of Proposition \ref{S*S}} This proceeds exactly along the lines of Proposition \ref{T*T}
by considering
the kernel $M(x,u)$ of ${\tilde S}_{\bf p}^{*} {\tilde S}_{\bf p}$ which is given by
$$
M(x,u) \ = \ \varphi(x)\varphi(u) \int_{{\mathbb R}^2} e^{i [P(y_1 - x_1, y_2) - P(y_1-u_1, y_2)]} 
\psi_{\bf p}(y-x) \psi_{\bf p}(y-u) \, dy.
$$
We have the same support conditions for $M$ as we did for $L$ and again 
we make the change of variables $y \to 2^{\bf p}\circ y$ to conclude
$$
M(x,u) \ = \ \varphi(x)\varphi(u) 2^{-(p+q)} \int_{{\mathbb R}^2} e^{i \Phi(y)} \, 
\Theta(y) \, dy.
$$
where this time 
$$
\Phi(y) = \Phi_{{\bf p}, x, u}(y) = \sum c_{j,k} 2^{p j + q k} 
\bigl[ (y_1 - 2^{-p} x_1)^{j} - (y_1 - 2^{-p}u_1)^{j} \bigr] y_2^k
$$
and $\Theta(y)$ is unchanged, 
a smooth function, supported in $[-5,5]^2$ with uniformly bounded $C^k$ norms. Using
an appropriately modified defintion of $g(t)$ we see that
$$
\Phi(y) \ = \ \int_0^1 \Bigl[ \sum_{(j,k) \in \Delta_P} c_{j,k} 2^{p j + q k} 
j 2^{-p} (x_1 - u_1) X^{j -1} y_2^k 
 \Bigr] \, dt
$$
$$
= \ \int_0^1 \Bigl[\sum_{(j,k) \in {\tilde \Delta}_P} \bigl[
c_{j+1, k} 2^{j p + k q}
(j+1) (x_1 - u_1) \bigr] X^j Y^k \Bigr] \, dt
$$
where now ${\tilde \Delta}_P =  \Delta_P - (1,0)$ and where 
$X$ is same as before but now $Y = y_2$. Again we see
that $X, Y \in [-10,10]^2$ for all $t\in [0,1]$. 

The analysis now proceeds exactly as before. We only note that (for an appropriately modified $Q$)
$$
\|Q\|_1 \ \ge \ |d_{j_0 - 1, k_0}| \ = \
\bigl| c_{j_0, k_0} 2^{p (j_0 - 1) + q k_0} j_0 
(x_1 - u_1) \bigr|.
$$
The rest of the proof of Proposition \ref{S*S} follows line by line the proof of Proposition \ref{T*T}.

This completes the proof of \eqref{reduction} with ${\mathcal F} = {\mathcal F}_0$ in the
case $j_0 \ge 1$. 

{\bf The case $j_0 = 0$.} When $j_0 = 0$, we modify the above argument as follows.

First we note that $(0, k_0) \in \Delta$ shows that
 $\Delta_0 = \{ k\ge 2 : (0, k) \in \Delta \}$ is nonempty and so, since $|\Delta_0| |\Delta_1| =0$, 
we see that $\Delta_1 = \emptyset$. We decompose ${\mathcal F}_0$ into
$O_{|\Delta|}(1)$ disjoint sets $\{{\mathcal F}_{0,\sigma}\}$ such that for each $\sigma$,
there is an $\alpha_{\sigma} = (j_{\sigma}, k_{\sigma}) \in \Delta \setminus \Delta_0$ with
$j_{\sigma}\ge 2$ (since $\Delta_1 = \emptyset$) and
$$
|c_{\alpha_{\sigma}}| 2^{{\bf p}\cdot \alpha_{\sigma}} \ \ge \ 
|c_{\alpha}| 2^{{\bf p}\cdot \alpha}
$$
for all $\alpha \in \Delta \setminus \Delta_0$ whenever ${\bf p} \in {\mathcal F}_{0,\sigma}$.
Note that $|c_{\alpha_0}| 2^{{\bf p}\cdot \alpha_0} \ge |c_{\alpha_{\sigma}}| 2^{{\bf p}\cdot
\alpha_{\sigma}}$ for all ${\bf p} \in {\mathcal F}_0$.

We fix one of these subsets ${\mathcal F}_{0,\sigma_1}$ and establish \eqref{reduction} with
${\mathcal F} = {\mathcal F}_{0,\sigma_1}$. To simplify notation we write
$\alpha_1 = (j_1, k_1)$ instead of $\alpha_{\sigma_1} = (j_{\sigma_1}, k_{\sigma_1})$.

We write $P(x_1, x_2) = Q(x_1, x_2) + T(x_2)$ where ($x = (x_1, x_2)$)
$$
Q(x) \ = \ \sum_{\alpha \in \Delta\setminus \Delta_0} c_{\alpha} \, x^{\alpha} \ \ \
{\rm and} \ \ \ T(x_2) \ = \ \sum_{(0,k)\in \Delta_0} c_{0,k} \, x_2^k.
$$
We modify the definition of the comparison operator $S_{\bf p}$ as
$$
S_{\bf p} f(x) \ := \  \int_{{\mathbb R}^2} \psi_{\bf p}(x - y) e^{i [ Q(x_1 - y_1, x_2) + T(x_2 - y_2)]}
f(y) \, dy
$$
and consider $D_{\bf p} = T_{\bf p} - S_{\bf p}$ as before. The two estimates \eqref{D-diff}
and \eqref{D-decay} now become
\begin{equation}\label{D-diff-mod}
\|D_{\bf p} a \|_{L^1} \ \lesssim \ |c_{j_1, k_1}| 2^{p j_1 + q (k_1 -1)}
\end{equation}
and
\begin{equation}\label{D-decay-mod}
\|D_{\bf p} a\|_{L^1} \ \lesssim \ 
\begin{cases}
|c_{j_1, k_1} 2^{p j_1 + q k_1} |^{-\delta} & \mbox{\rm if} \ p\le 0 \\
|c_{j_1, k_1} 2^{p (j_1 - 1) + q k_1} |^{-\delta} & \mbox{\rm if} \ p\ge 0 
\end{cases} .
\end{equation}
The difference bound \eqref{D-diff-mod} is straightforward and the decay bound \eqref{D-decay-mod}
follows along the same lines establishing \eqref{D-decay}. The key here is that $j_1 \ge 1$ (in fact we
know $j_1 \ge 2$ and this will be needed later) and so \eqref{D-diff-mod} and \eqref{D-decay-mod}
together allow us to see that the sum 
$\sum_{{\bf p} \in {\mathcal F}_{0,\sigma_1}} \|D_{\bf p} a\|_{L^1}$ is uniformly bounded, 
reducing matters to showing 
\begin{equation}\label{reduction-S}
\int_{{\mathbb R}^2} \bigl| S_{{\mathcal F}_{0,\sigma_1}} a (x) \bigr| \, dx \ \lesssim \ 1
\end{equation}
where
$S_{{\mathcal F}_{0,\sigma_1}} = \sum_{{\bf p} \in {\mathcal F}_{0,\sigma_1}} S_{\bf p}$.
The arguments for the case $j_0 \ge 1$ do not apply to $S_{{\mathcal F}_{0,\sigma_1}}$
and we return to the proof of \eqref{reduction-S} after an interlude.

\section{Proof of Theorem \ref{2-d} -- the proof of the necessity}

Since the proof of the necessity uses some arguments from the previous section, we
now pause in the proof of the sufficiency part and give a proof of the necessity;
that is, to show that condition \eqref{nec-suff} in Theorem \ref{2-d} is a necessary condition
for the uniform bound \eqref{main-bound} to hold.

The necesssity of $(1,0)$ and $(0,1) \notin \Delta$ is well known so we will assume
this condition holds but suppose $|\Delta_0| |\Delta_1| + |\Delta^0| |\Delta^1| \ge 1$. Under these
assumptions we will show that the uniform bound in \eqref{main-bound} does not hold. Without
loss of generality suppose $|\Delta_0| |\Delta_1| \ge 1$ so that there exists $k_0\ge 2$ and
$k_1\ge 1$ such that $(0,k_0), (1, k_1)\in \Delta$. We consider the subfamily of polynomials
$P_{c,d}(s,t) = c s t^{k_1} + d t^{k_0}  \in {\mathcal V}_{\Delta}$ as $c, d$ vary over ${\mathbb R}$.
If \eqref{main-bound} holds, then
\begin{equation}\label{special-main-bound}
\int\!\!\!\int_{10 \le |x_1|\le |x_2| \ll \epsilon^{-1}} \bigl| S_{P_{c,d}, {\mathcal K}} a(x_1, x_2) \bigr|
\, dx_1 dx_2 \ \lesssim \ 1
\end{equation}
holds uniformly for all $0<\epsilon \ll 1, c, d \in {\mathbb R}$ and atoms $a$ supported in the
unit square. 
Our aim is to show that \eqref{special-main-bound} does not hold. 

In fact, for our
atom $a$ we simply take $a(y) = b(y_1) b(y_2)$ where $y = (y_1, y_2)$ and
$b(u) = 1$ when $0 \le u \le 1/2$ and $b(u) = -1$ when $-1/2 \le u < 0$. We will choose
$c = c(\epsilon), d = d(\epsilon) \in {\mathbb R}$ and show that the integral in 
\eqref{special-main-bound},
$$
I(\epsilon) \ := \ 
\int\!\!\!\int_{10 \le |x_1|\le |x_2| \ll \epsilon^{-1}} \Bigl| 
\int\!\!\!\int_{{\mathbb R}^2}  a(x_1 -s, x_2 -t)  e^{i [c s t^{k_1} + d t^{k_0}]} \frac{ds dt}{s t} \Bigr|
\, dx_1 dx_2,
$$
satisfies $I(\epsilon) \gtrsim \log(\epsilon^{-1})$ which will show that \eqref{special-main-bound}
fails.

From the arguments of the previous section, we see that
$$
I(\epsilon) = 
\int\!\!\!\int_{E} \Bigl| \int\!\!\!\int_{{\mathbb R}^2} e^{i[c(x_1 - s) x_2^{k_1} + d (x_2 - t)^{k_0}]}
\frac{1}{(x_1 - s)(x_2 - t)} a(s, t) \, ds dt \Bigr|
\, dx_1 dx_2 \ + \ O(1)
$$
holds where 
$$
E \ :=  \ \bigl\{(x_1, x_2): 10 \le |x_1|\le |x_2| \ll \epsilon^{-1}, |c x_1 x_2^{k_1}| \le |d x_2^{k_0}|
\bigr\}.
$$
This is precisely the reduction to \eqref{reduction-S} when $P(s,t) = c s t^{k_1} + d t^{k_0}$.

We note that
$$
\int\!\!\!\int_{10 \le |x_1|\le |x_2|} \Bigl| 
\int\!\!\!\int_{{\mathbb R}^2} e^{i [c (x_1 - s) x_2^{k_1} + d (x_2 - t)^{k_0}]} 
\frac{1}{x_1 - s} \Bigl[\frac{1}{x_2 - t} - \frac{1}{x_2}\Bigr] a(s,t) ds dt \Bigr|
\, dx_1 dx_2
$$
$$
\lesssim \ \int_{10\le |x_1|} \frac{1}{|x_1|} \Bigl[ \int_{|x_1|\le |x_2|} \frac{1}{x_2^2} \ d x_2 \Bigr] 
\, dx_1 \ \lesssim
\ \int_{10 \le |x_1|} \frac{1}{x_1^2} \ d x_1 \ \lesssim \
\ 1
$$
and so
$$
I(\epsilon) = 
\int\!\!\!\int_{E} \frac{1}{|x_2|} \Bigl| \int\!\!\!\int_{{\mathbb R}^2} 
e^{i[c(x_1 - s) x_2^{k_1} + d (x_2 - t)^{k_0}]}
\frac{1}{(x_1 - s)} a(s, t) \, ds dt \Bigr|
\, dx_1 dx_2 \ + \ O(1).
$$
Next we show that the integral
$$
\int\!\!\!\int_{E} \frac{1}{|x_2|} \Bigl| \int\!\!\!\int_{{\mathbb R}^2} 
e^{i[c(x_1 - s) x_2^{k_1} + d (x_2 - t)^{k_0}]}
\Bigl[\frac{1}{(x_1 - s)} - \frac{1}{x_1}\Bigr] a(s, t) \, ds dt \Bigr|
\, dx_1 dx_2
$$
is $O(1)$. We note that this integral is at most
$$
\int\!\!\!\int_{10\le |x_1|\le |x_2|} \frac{1}{|x_2| x_1^2}  \ \Bigl[
\int_{\mathbb R} \Bigl| \int_{\mathbb R} e^{i d(x_2 - t)^{k_0}} a(s,t) \, dt \Bigr| \, ds \Bigr] \, dx_1 dx_2
\ =: \ I ;
$$
we split $I = II + III$ into two integrals where the integration in $II$ is further restricted 
to where $|d x_2^{k_0 -1}| \le 1$ and the integration in $III$ is over the complement,
where $|d x_2^{k_0 -1}| \ge 1$.

Using the cancellation of the atom $a$, we see that
$$
II \ = \ 
\int\!\!\!\int_{F} \frac{1}{|x_2| x_1^2}  \ \Bigl[
\int_{\mathbb R} \Bigl| \int_{\mathbb R} \bigl[ e^{i d(x_2 - t)^{k_0}} - e^{id x_2^{k_0}}\bigr] 
a(s,t) \, dt \Bigr| \, ds \Bigr] \, dx_1 dx_2
$$
where $F = \{(x_1, x_2) : 10 \le |x_1|\le |x_2|,  |d x_2^{k_0}| \le 1 \}$. Hence
$$
II \lesssim |d| \, \int_{10\le |x_1|} \frac{1}{x_1^2} \Bigl[ \int_{|d x_2^{k_0}|\le 1}
|x_2|^{k_0 -2} \, d x_2 \Bigr] \, dx_1 \ \lesssim \ 1
$$
where we used (crucially) the fact that $k_0 \ge 2$. 

To treat $III$, we fix $s$ and write $B(x_2) = B(x_2, s) = \int_{\mathbb R} e^{i d(x_2 - t)^{k_0}} a(s,t) dt$
for the inner integral in $III$ and 
use the Cauchy-Schwarz inequality
to see that
$$
\int_{1\le |x_2|, |d x_2^{k_0-1}|} \frac{1}{|x_2|} |B(x_2)| dx_2 \ \le
$$
$$
\sqrt{\int_{1\le |x_2|} \frac{1}{|x_2|^{1 + 1/k_0}} } \ \ \ \sqrt{\int_{1\le |d x_2^{k_0 -1}|}
 \frac{1}{|x_2|^{(k_0 -1)/k_0}} |B(x_2)|^2 dx_2} \ \ \lesssim 
$$
$$
|d|^{1/k_0} \sqrt{ \int_{\mathbb R} \Bigl| \int_{\mathbb R} e^{i d (x_2 - t)^{k_0}} a(s,t) \, dt \Bigr|^2 dx_2}
\ = \ |d|^{1/k_0} \sqrt{ \int_{\mathbb  R} |{\tilde a}(s,\eta) m(\eta)|^2 d\eta }
$$
where ${\tilde a}$ denotes the partial fourier transform in the second variable. Here
$$
m(\eta) \ = \ \int_{\mathbb R} e^{i [d t^{k_0} + \eta t]}\  dt
$$
is the oscillatory integral multiplier which arises when computing the fourier transform of
$B(x_2)$ and is a well defined integral (defined as a limit as $R\to\infty$ of truncated
integrals $|t|\le R$ which converges since $k_0 \ge 2$). Furthermore by van der Corput's
lemma, see \cite{S} page 332, we have $|m(\eta)| \le C_{k_0} |d|^{-1/k_0}$ and so
$$
\int_{1\le |x_2|, |d x_2^{k_0-1}|} \frac{1}{|x_2|} |B(x_2)| dx_2 \ \lesssim \
\sqrt{\int_{\mathbb R} |a(s, t)|^2 \, dt}
$$
implying that
$$
|III| \ \lesssim \
\int_{1\le |x_1|} \frac{1}{x_1^2} \int_{\mathbb R} 
\sqrt{\int_{\mathbb R} |a(s, t)|^2 \, dt} \ ds \ dx_1 \ \lesssim \ 1
$$
with a final application of the Cauchy-Schwarz inequality.

Therefore
$$
I(\epsilon) \ = \ \int\!\!\!\int_E \frac{1}{|x_1||x_2|} \Bigl| \int_{{\mathbb R}^2} 
e^{i [-c s x_2^{k_1} + d (x_2 - t)^{k_0}]} a(s,t) \, ds dt \Bigr| \, dx_1 dx_2 \ + \ O(1).
$$
From our definition of $a(s,t) = b(s) b(t)$ we have
$$
\int_{{\mathbb R}^2} 
e^{i [-c s x_2^{k_1} + d (x_2 - t)^{k_0}]} a(s,t) \, ds dt \ = \ {\widehat b}(c x_2^{k_1}) \,
\int_{|t|\le 1/2} b(t) e^{i d (x_2 - t)^{k_0}} dt
$$
and we note that 
\begin{equation}\label{FT-b}
|{\widehat b}(c x_2^{k_1})| \ = \ \Bigl| \frac{\cos(c x_2^{k_1}) - 1}{c x_2^{k_1}} \Bigr| \ \gtrsim \
|c x_2^{k_1}|
\end{equation}
whenever $|c x_2^{k_1}| \ll 1$. With a little work, we can also show that
\begin{equation}\label{b}
\Bigl| \int_{|t|\le 1/2} b(t) e^{i d (x_2 - t)^{k_0}} \, dt \Bigr| \ \gtrsim \
|d x_2^{k_0 - 1}|
\end{equation}
whenever $|d x_2^{k_0 -1}| \ll 1 \ll |x_2|$. We will show this later.

Hence
$$
I(\epsilon) \ \ \gtrsim \ \  \int\!\!\!\int_G \frac{1}{|x_1| |x_2|} \ |d x_2^{k_0 -1}| \, |c x_2^{k_1}| \, dx_1 dx_2
$$
where $G = \{(x_1, x_2) \in E: |d x_2^{k_0 -1}|, |c x_2^{k_1}| \ll 1 \}$. We now choose
$d = \epsilon^{k_0 - 1}$ and $c = \epsilon^{k_1}$ so that
$|x_2| \ll \epsilon^{-1}$ implies $|c x_2^{k_1}|, |d x_2^{k_0 - 1}| \ll 1$. Thus with this
choice of $c$ and $d$, we have $E = G$. We divide the concluding analysis of the integral above
into two cases.

{\bf Case 1}: $k_0 -1 \le k_1$. Here $|x_1| \le |x_2| \le \epsilon^{-1}$ automatically implies 
$|c x_1 x_2^{k_1}| \le |d x_2^{k_0}|$ and so $E = G = \{(x_1,x_2) : 10 \le |x_1| \le |x_2| \ll \epsilon^{-1}\}$.
In fact,
$$
|x_2|\le \epsilon^{-1} \ \Rightarrow \ |\epsilon x_2|^{k_1 - k_0 +1} \le 1 \ \Rightarrow \
|\epsilon x_2|^{k_1} \le |\epsilon x_2|^{k_0 -1} \ \Rightarrow \
|\epsilon x_2|^{k_1} |x_2| \le |d x_2^{k_0}|
$$
and so
$$
|c x_1 x_2^{k_1}| \ = \ |\epsilon x_2|^{k_1} |x_1| \ \le \ |\epsilon x_2|^{k_1} |x_2| \ \le \
|d x_2^{k_0}|.
$$
Therefore
$$
I(\epsilon) \ \gtrsim \ \int\!\!\!\int_{E} 
\frac{1}{|x_1| |x_2|} |\epsilon x_2|^{k_0 + k_1 - 1} \, dx_1 dx_2 \ \gtrsim \
\int_{10\le |x_2| \ll \epsilon^{-1}} \log(|x_2|) \ |\epsilon x_2|^{k_0 + k_1 -2} \epsilon dx_2
$$
$$
\sim \ \int_{10 \epsilon \le y \ll 1} \log(y/\epsilon) \ y^{k_0 + k_1 - 2} \, dy \ \gtrsim \
\log(1/\epsilon) 
$$
since $k_0 + k_1 - 2 \ge 1$. 

{\bf Case 2}: $k_1 < k_0 - 1$. Here $|c x_1 x_2^{k_1}| \le |d x_2^{k_0}|$ and $|x_2| \le \epsilon^{-1}$
imply that $|x_1| \le |x_2|$. In fact,
$$
|c x_1 x_2^{k_1}| \le |d x_2^{k_0}| \ \Rightarrow \ |x_1| |\epsilon x_2|^{k_1} \le |x_2| 
|\epsilon x_2|^{k_0 - 1}
$$
and since $|x_2| \le \epsilon^{-1}$,  we have $|x_1| \le |\epsilon x_2|^{k_0 - k_1 -1} |x_2| \le |x_2|$.
Hence 
$$
E \ = \ \bigl\{(x_1, x_2): 10 \le |x_2| \ll \epsilon^{-1}, 10\le |x_1|, \ {\rm and} \
 |x_1| |\epsilon x_2|^{k_1} \le
|x_2| |\epsilon x_2|^{k_0 -1} \bigr\}
$$
and so
$$
I(\epsilon) \ \  \gtrsim \ \ \int\limits_{\begin{array}{c}\scriptstyle
10\le |x_1|, 10\le |x_2| \ll \epsilon^{-1}\\
         \vspace{-5pt}\scriptstyle |\epsilon x_2|^{k_1} |x_1| \le |\epsilon x_2|^{k_0 -1} |x_2|
         \end{array}}
\frac{1}{|x_1| |x_2|} \ |\epsilon x_2|^{k_0 + k_1 - 1} \, dx_1 dx_2
$$
$$
\gtrsim \ \int\limits_{\begin{array}{c}\scriptstyle
10 \epsilon\le |y| \ll 1\\
         \vspace{-5pt}\scriptstyle 10 \le |x_1| \le |y|^{k_0 - k_1 -1} |y/\epsilon| 
         \end{array}}
\frac{1}{|x_1|} \ |y|^{k_0 + k_1 - 2} \, dx_1 dy
$$
$$
\gtrsim \ \int_{\epsilon^{1/(k_0 - k_1)} \le |y| \ll 1} \bigl[ \log(1/\epsilon) + \log(|y|^{k_0 - k_1}) \bigr]
|y|^{k_0 + k_1 -2} \, dy \ \gtrsim \ \log(1/\epsilon)
$$
which shows that $I(\epsilon) \gtrsim \log(1/\epsilon)$ holds in both cases IF
\eqref{b} holds. 

We now establish \eqref{b}. First we note that
$$
\int_{|t|\le 1/2} b(t) e^{i d (x_2 - t)^{k_0}} \, dt \ = \
\int_0^{1/2} \bigl[ e^{i d (x_2 - t)^{k_0}} - e^{i d (x_2 + t)^{k_0}} \bigr] \, dt
$$
$$
= \ 
e^{i d x_2^{k_0}} \int_0^{1/2} \bigl[ e^{i [ k_0 d x_2^{k_0 -1} (-t) + \ldots ]} - 
e^{i [ k_0 d x_2^{k_0 -1} t + \ldots ] } \bigr] \, dt
$$
and so
$$
\Bigl| \int_{|t|\le 1/2} b(t) e^{i d (x_2 - t)^{k_0}} \, dt \Bigr|  \ge 
\Bigl| \int_0^{1/2} \bigl( \sin(k_0 d x_2^{k_0 -1}  t [ 1 + g(t) ]) +
\sin(k_0 d x_2^{k_0 -1}  t [ 1 + h(t) ]) \bigr) \, dt \Bigr|
$$
where $g(t), h(t) = O(1/|x_2|)$. For large $|x_2| \gg 1$ and small $|d x_2^{k_0 -1}| \ll 1$, we see that
the integrand in the above integral is single-signed and both
$$
\sin(k_0 d x_2^{k_0 -1}  t [ 1 + g(t) ]),  \sin(k_0 d x_2^{k_0 -1}  t [ 1 + h(t) ])| \
 = \ k_0 d x_2^{k_0 -1} ( 1 + F(t))
$$
for some $|F(t)| \le 1/2$  and all $0\le t \le 1/2$. Hence
$$
|\sin(k_0 d x_2^{k_0 -1}  t [ 1 + g(t) ]) +
\sin(k_0 d x_2^{k_0 -1}  t [ 1 + h(t) ])| \ \gtrsim \ |d x_2^{k_0 -1}|
$$
for all $0\le t \le 1/2$, showing that indeed \eqref{b} holds.

\section{Proof of Theorem \ref{2-d} -- the conclusion of the sufficiency part}

We return to complete the proof of the sufficiency part ot Theorem \ref{2-d} where
matters were reduced to establshing \eqref{reduction-S}.

We split ${\mathcal F}_{0,\sigma_1}$ into  ${\mathcal F}_{0,\sigma_1}^+ \cup
{\mathcal F}_{0,\sigma_1}^{-}$ where
$$
{\mathcal F}_{0,\sigma_1}^+ \ := \ \bigl\{(p,q) \in {\mathcal F}_{0,\sigma_1} : p\ge 0\bigr\}
\ \ {\rm and} \ \
{\mathcal F}_{0,\sigma_1}^{-} \ := \ \bigl\{(p,q) \in {\mathcal F}_{0,\sigma_1} : p< 0\bigr\}.
$$
We first concentrate on establishing \eqref{reduction-S} for ${\mathcal F}_{0,\sigma_1}^+$. We
further split ${\mathcal F}_{0,\sigma_1}^+$ into 
${\mathcal F}_{0,\sigma_1}^{+,1} \cup
{\mathcal F}_{0,\sigma_1}^{+,2}$ where
$$
{\mathcal F}_{0,\sigma_1}^{+,1} \ := \ \bigl\{(p,q) \in {\mathcal F}_{0,\sigma_1}^+ :
|c_{j_1, k_1}| 2^{ p(j_1 - 1) + q k_1} \le |c_{0,k_0}| 2^{q(k_0 -1)} \bigr\}
$$
and ${\mathcal F}_{0,\sigma_1}^{+,2}$ is defined with the opposite inequality. We recall that
by condition \eqref{nec-suff}, $j_1 \ge 2$ and now this becomes important in our analysis. 

\subsection*{The bound \eqref{reduction-S} for ${\mathcal F}_{0,\sigma_1}^{+,1}$}
For
${\bf p} = (p,q) \in {\mathcal F}_{0,\sigma_1}^{+,1}$, consider 
$D_{\bf p}^{+,1} = S_{\bf p} - R_{\bf p}^{+,1}$
where 
$$
R_{\bf p}^{+,1} a (x) \ := \ \int_{{\mathbb R}^2} \psi_{\bf p} (x -y) e^{i [Q(x_1, x_2) + T(x_2 - y_2)]}
a (y) \, dy.
$$
For $|x_1 - y_1| \sim 2^p, p\ge 0$ and $|y_1|\le 1$, we have $|x_1| \lesssim 2^p$
and so $| (x_1 - y_1)^j - x_1^j| \lesssim 2^{p(j-1)}$ for any $j\ge 0$ implying
$|Q(x_1 -y_1, x_2) - Q(x_1, x_2)| \lesssim |c_{j_1, k_1}| 2^{p (j_1 -1) + q k_1}$
whenever $\psi_{\bf p}(x-y) \not= 0$ and ${\bf p} = (p,q) \in {\mathcal F}_{0,\sigma_1}^+$.
Therefore
\begin{equation}\label{D+1-diff}
\|D_{\bf p}^{+,1} a \|_{L^1} \ \lesssim \ |c_{j_1, k_1}| 2^{p (j_1 -1) + q k_1}
\end{equation}
holds for any ${\bf p} = (p,q) \in {\mathcal F}_{0,\sigma}^{+,1}$. The complementary decay
bound (established separately for $S_{\bf p}$ and $R_{\bf p}^{+,1}$) is
\begin{equation}\label{D+1-decay}
\|D_{\bf p}^{+,1} a \|_{L^1} \ \lesssim \ \bigl[ |c_{0,k_0}| 2^{q(k_0 -1)} \bigr]^{-\delta}
\end{equation}
which holds for some $0<\delta < 1$ and every ${\bf p} = (p,q) \in {\mathcal F}_{0,\sigma}^{+,1}$.
Let us first see how to combine \eqref{D+1-diff} and \eqref{D+1-decay} to successfully sum over
${\bf p} = (p,q) \in {\mathcal F}_{0,\sigma_1}^{+,1}$.  

For any $0<\epsilon < 1$, we have
$$
\|D_{\bf p} a\|_{L^1} \ \lesssim \ 
\bigl[ |c_{j_1, k_1}| 2^{p(j_1 -1) + q k_1} \bigr]^{\epsilon}
\bigl[ |c_{0,k_0}| 2^{q (k_0-1)}\bigr]^{- \delta(1-\epsilon)}
$$
and so for fixed $q$, we can sum over $p \in H_q := \{p: (p,q) \in {\mathcal F}_{0,\sigma_1}^{+,1}\}$
(using in a crucial way that $j_1 \ge 2$!),
$$
\sum_{p\in H_q} \|D_{\bf p}^{+,1} a \|_{L^1} \ \lesssim \ 
\bigl[ |c_{0,k_0}| 2^{q (k_0-1)}\bigr]^{\epsilon - \delta(1-\epsilon)}
$$
and this can be summed successfully over $q$ because (importantly) 
$k_0\ge 2$; when summing over $q$ such that
$|c_{0,k_0}| 2^{q (k_0-1)} \le 1$, we choose $\epsilon$ such that $\epsilon - \delta(1-\epsilon)>0$
and when summing over $q$ such that
$|c_{0,k_0}| 2^{q (k_0-1)} \ge 1$, we choose $\epsilon$ such that $\epsilon - \delta(1-\epsilon) < 0$.

Hence to establish \eqref{reduction-S} for ${\mathcal F}_{0,\sigma_1}^{+,1}$, matters are reduced
to showing
\begin{equation}\label{one-parameter}
\int_{{\mathbb R}^2} \Bigl| \sum_{{\bf p} \in {\mathcal F}_{0,\sigma_1}^{+,1}} 
\int_{{\mathbb R}^2} \psi_{\bf p}(x - y) e^{i T(x_2 - y_2)} a(y) \, dy \Bigr| \, dx \ \lesssim \ 1
\end{equation}
but this is more or less a one parameter operator and the arguments in \cite{Pan} can be
used to establish \eqref{one-parameter}. 

We now turn to the proof of \eqref{D+1-decay}. By Cauchy-Schwarz, we have
$$
\|D_{\bf p}^{+,1} a \|_{L^1} \ \lesssim \ 2^{(p+	q)/2} \|D_{\bf p} a \|_{L^2}
$$
and to bound $\|D_{\bf p} a \|_{L^2}$, we treat $S_{\bf p}$ and $R_{\bf p}^{+,1}$ separately
by estimating the $L^2$ operator norms of
$$
{\tilde S}_{\bf p} f(x) \ := \ \int_{{\mathbb R}^2} \psi_{\bf p}(x-y) e^{i[Q(x_1 - y_1, x_2) + T(x_2 - y_2)]}
\varphi(y) \, f(y) \, dy
$$
and
$$
{\tilde R}_{\bf p}^{+,1} f(x) \ := \ 
\int_{{\mathbb R}^2} \psi_{\bf p}(x-y) e^{i T(x_2 - y_2)}
\varphi(y) \, f(y) \, dy
$$
via examining the kernels of ${\tilde S}_{\bf p}^{*} {\tilde S}_{\bf p}$ and
${{\tilde R}_{\bf p}^{+,1 \,  *}} {\tilde R}_{\bf p}^{+,1}$.

Instead of the unorthodox argument used in Section \ref{prelims} to combine
Lemmas \ref{equiv-norms} and \ref{UL-1}, we will take a more direct route. The
kernel of ${\tilde S}^{*}_{\bf p} {\tilde S}_{\bf p}$ is
$$
N(x,u) \ = \ \varphi(x)\varphi(u) \int_{{\mathbb R}^2} 
e^{i [Q(y_1 - x_1, y_2) - Q(y_1-u_1, y_2) + T(y_2 - x_2) - T(y_2 - u_2)]} 
\psi_{\bf p}(y-x) \psi_{\bf p}(y-u) \, dy
$$
and again
we make the change of variables $y \to 2^{\bf p}\circ y$ to conclude
$$
N(x,u) \ = \ \varphi(x)\varphi(u) 2^{-(p+q)} \int_{{\mathbb R}^2} e^{i \Phi(y)} \, 
\Theta(y) \, dy.
$$
where now
$$
\Phi(y) \ = \ \Phi_{{\bf p}, x, u}(y)  \ = \
\sum_{(j,k)\in \Delta\setminus \Delta_0} c_{j,k} 2^{p j + q k} 
\bigl[ (y_1 - 2^{-p}x_1)^{j} - (y_1 - 2^{-p} u_1)^{j} \bigr] y_2^k
$$
$$
+ \ \sum_{(0,k) \in \Delta_0} c_{0,k} 2^{q k} \bigl[(y_2 - 2^{-q} x_2)^k - (y_2 - 2^{-q} u_2)^k \bigr]
\ = \ \sum d_{j,k} \, y_1^j y_2^k
$$
and $\Theta(y)$ is the same as before, a
smooth function, supported in $[-5,5]^2$ with uniformly bounded $C^k$ norms. 
We apply Lemma \ref{equiv-norms} directly to $\Phi$ to find a derivative $\partial^{\alpha}$
where $\alpha = (j,k)$ with $|\alpha| = j + k \ge 1$ and such that $|\partial^{\alpha} \Phi (y)|
\gtrsim \|\Phi\|_1$, uniformly for $y \in [-5,5]^2$. This is a case where we will be able
to effectively bound $\|\Phi\|_1$ from below and then a standard multidimensional
version of van der Corput's lemma (as in \cite{S}) suffices athough one can also appeal
to Lemma \ref{UL-1}. 

We note that $\|\Phi\|_1 \ge |d_{0, k_0 -1}|$ and
$$
d_{0, k_0 -1} = 
\sum_{k\ge k_0} c_{0,k} 2^{q k} e_{k,k_0} \big[ (- 2^{-q} x_2)^{k-k_0+1} - (-2^{-q} u_2)^{k-k_0 + 1}\bigr]
$$
$$
+ \ \sum_{j\ge 2} c_{j, k_0 -1} 2^{p j + q (k_0 -1)} \big[ (-2^{-p} x_1)^j - (-2^{-p} u_1)^j \bigr]
$$
where $e_{k,k_0}$ are numerical constants depending only on $k$ and $k_0$. Hence
$$
d_{0, k_0 -1} \ = \
c_{0, k_0} 2^{q(k_0 -1)} 
\Bigl[ u_2 - x_2 + \sum_{k\ge k_0 +1} \frac{c_{0,k}}{c_{0,k_0}} \bigl[ (-x_2)^{k-k_0+1} - (-u_2)^{k-k_0 +1}
\bigr]  
$$ 
$$
+ \ 
\sum_{j\ge 2} \frac{c_{j, k_0 -1}}{c_{0, k_0}} \bigl[ (-x_1)^j - (-u_1)^j \bigr] \Bigr]
$$
and so $d_{0, k_0 -1} = c_{0, k_0} 2^{q (k_0 -1)} [ u_2 - x_2 + O(2^{-q})]$. In fact
$d_{0, k_0 -1} = c_{0, k_0} 2^{q (k_0 -1)} f(x_2)$ where
$f = f_{x_1, u}$ satisfies $|f(x_2)| \lesssim 1$ and $|f'(x_2)|\gtrsim 1$
on $[-3,3]$. 

Hence by Lemma \ref{UL-1} we can find a $0 < \delta < 1$ such that
$$
\int_{|x|\le 1} |N(x,u)| dx \ \lesssim \ 2^{-(p+q)} \frac{1}{|c_{0, k_0} 2^{q (k_0 -1)} |^{\delta}}
\int_{|x|\le 1} \frac{1}{|f(x_2)|^{\delta}} \, dx
$$
and from the properties of $f$ ($\|f\|_{\infty} \lesssim 1$ and $|f'(s)|\gtrsim 1$), it is a
standard argument to show that the integral on the right hand side above is uniformly bounded.
In fact we fix $x_1$ and bound the integral in $x_2$;
$$
\int_{|x_2|\le 1} \frac{1}{|f(x_2)|^{\delta}} \, dx_2 \ = \ \sum_{\ell\ge 0} \ \int_{E_{\ell}} 
\frac{1}{|f(x_2)|^{\delta}} \, dx_2
$$
where $E_{\ell} = \{|x_2|\le 1: |f(x_2)| \sim 2^{-\ell} \}$. Since the derivative of $f$
is bounded below, we see that $|E_{\ell}| \lesssim 2^{-\ell}$ and so
$$
 \sum_{\ell\ge 0} \int_{E_{\ell}} 
\frac{1}{|f(x_2)|^{\delta}} \, dx_2 \ \sim \ \sum_{\ell\ge 0} 2^{\delta \ell} |E_{\ell}| \ \lesssim \
\sum_{\ell\ge 0} 2^{-\ell(1-\delta)}
$$
which converges since $\delta < 1$. For a general treatment of integrals using this method, 
see for example  \cite{RS2}. 

As before this leads to the bound 
$\|{\tilde S}_{\bf p}\|_{L^2 \to L^2} \lesssim 
2^{-(p+q)/2} \bigl[|c_{0, k_0}| 2^{q (k_0 -1)} \bigr]^{-\delta/2}$ which shows that
\eqref{D+1-decay} holds for $S_{\bf p}$. The treatment for ${\tilde R}_{\bf p}^{+,1}$
is easier as the phase function $\Phi(y)$ which arises does not have any terms with
$c_{j, k}$ where $j\ge 2$. 

\subsection*{The bound \eqref{reduction-S} for ${\mathcal F}_{0,\sigma_1}^{+,2}$}
For ${\bf p} \in {\mathcal F}_{0,\sigma_1}^{+,2}$ we consider 
$D_{\bf p}^{+,2} = S_{\bf p} - R_{\bf p}^{+,2}$ where
$$
R_{\bf p}^{+,2} a (x) \ := \ \int_{{\mathbf R}^2} \psi_{\bf p}(x -y) e^{i [ Q(x_1 - y_1, x_2) + T(x_2)]}
a(y) \, dy
$$
so that 
\begin{equation}\label{D+2-diff}
\|D_{\bf p}^{+,2} a \|_{L^1} \ \lesssim \ |c_{0, k_0}| \, 2^{q(k_0 -1)}
\end{equation}
holds for every ${\bf p} = (p,q) \in {\mathcal F}_{0,\sigma_1}^{+,2}$. The
complementary decay bound is
\begin{equation}\label{D+2-decay}
\|D_{\bf p}^{+,2} a \|_{L^1} \ \lesssim \ \bigl[|c_{j_1, k_1}| \, 2^{p (j_1 -1) + q k_1}\bigr]^{-\delta}
\end{equation}
which holds for some $0<\delta<1$. Combing the bounds \eqref{D+2-diff} and \eqref{D+2-decay},
using $j_1 \ge 2$ and $k_0\ge 2$, gives the uniform bound $\sum_{{\bf p} \in {\mathcal F}_{0,\sigma_1}^{+,2}}
\|D_{\bf p}^{+,2} a \|_{L^1} \lesssim 1$ as before.

Hence the proof that \eqref{reduction-S} holds for ${\mathcal F}_{0,\sigma_1}^{+,2}$ reduces to
showing
$$
\int_{{\mathbb R}^2} \ \Bigl| \sum_{{\bf p} \in {\mathcal F}_{0,\sigma_1}^{+,2}} 
\int_{{\mathbb R}^2} \psi_{\bf p}(x - y) e^{i Q(x_1 - y_1, x_2)} a(y) \, dy \Bigr| \ dx \ \lesssim \ 1
$$
but this is precisely the same bound as for $\|\sum_{{\bf p}\in {\mathcal F}_0} S_{\bf p} a \|_{L^1}$
which was treated in Section \ref{prelude} in the case $j_0 \ge 1$ (but now $j_1 \ge 1$). 

It remains to establish \eqref{D+2-decay}. But this is entirely analogous to the bound
\eqref{L2-bound} in Proposition \ref{S*S} for the case $p\ge 0$; we omit the details.

We complete the proof of Theorem \ref{2-d} by establishing \eqref{reduction-S} for
${\mathcal F} = {\mathcal F}_{0,\sigma_1}^{-}$. 

We first consider 
$$
U_{(p,q)} a(x) \ := \ \int_{{\mathbb R}^2} \psi_p (x_1 - y_1) [ \psi_q (x_2 - y_2) - \psi_q(x_2) ]
e^{i [Q(x_1 - y_1, x_2) + T(x_2 - y_2)]} a(y) \, dy
$$
and bound
$$
\int_{{\mathbb R}^2} \bigl| \sum_{(p,q) \in{\mathcal F}_{0,\sigma_1}^{-}} U_{(p,q)} a(x) \bigr|\, dx \
\le \ \int_{{\mathbb R}} \sum_{q\ge c_{\gamma}} \int_{\mathbb R} |\psi_q(x_2 - y_2) - \psi_q(x_2)| \
I(y_2, x_2) \, dy_2 \ dx_2
$$
where
$$
I(y_2, x_2) \ := \ \int_{\mathbb R} \ \Bigl| \sum_{p: (p,q) \in {\mathcal F}_{0,\sigma_1}^{-}} \int_{\mathbb R}
\psi_p (x_1 - y_1) e^{i Q(x_1 - y_1, x_2)} a(y) \ dy_1 \Bigr| \ dx_1 .
$$
We write $a_{y_2} (y_1) = a(y_1, y_2)$ and use the Cauchy-Schwarz inequality, together with
Plancherel's theorem, to see that
$$
I(y_2, x_2) \ \le \ \sqrt{\int_{\mathbb R} |{\widehat a_{y_2}}(\xi) m(\xi)|^2 \, d\xi }
$$
where 
$$
m(\xi) \ = \ \sum_{p: (p,q) \in {\mathcal F}_{0,\sigma_1}^{-}} \int_{\mathbb R}
\psi_p (s) e^{i [Q(s, x_2) + \xi s]} \, ds
$$
satisfies $|m(\xi)| \lesssim 1$ by Lemma \ref{UL-2}. Hence by Plancherel,
$$
\int_{{\mathbb R}^2} \bigl| \sum_{(p,q) \in{\mathcal F}_{0,\sigma_1}^{-}} U_{(p,q)} a(x) \bigr|\, dx \
\lesssim \ \sum_{q\ge c_{\gamma}} 2^{-2q} \int_{|x_2|\sim 2^q} \Bigl[ \int_{\mathbb R}
\sqrt{\int_{\mathbb R} |a(y_1, y_2)|^2 dy_1 } \ dy_1 \Bigr] \, dx_2
$$
and a final application of the Cauchy-Schwarz inequality shows that
$$
\int_{{\mathbb R}^2} \bigl| \sum_{(p,q) \in{\mathcal F}_{0,\sigma_1}^{-}} U_{(p,q)} a(x) \bigr|\, dx \
\lesssim \ 1.
$$
We are left with bounding the $L^1$ norm of $\sum_{(p,q) \in {\mathcal F}_{0,\sigma_1}^{-}}
V_{(p,q)} a(x)$ where
$$
V_{(p,q)} a(x) \ := \ \psi_q (x_2) \, \int_{{\mathbb R}^2}
e^{i[Q(x_1 - y_1, x_2) + T(x_2 - y_2)]} \psi_p (x_1 - y_1) a(y) \, dy.
$$
To do this, we split ${\mathcal F}_{0,\sigma_1}^{-}$ into ${\mathcal F}_{0,\sigma_1}^{-,1}
\cup {\mathcal F}_{0,\sigma_1}^{-,2}$ where
$$
{\mathcal F}_{0,\sigma_1}^{-,1} \ = \ \{(p,q) \in {\mathcal F}_{0,\sigma_1}^{-} :
|c_{j_1, k_1}| 2^{p j_1 + q k_1} \le |c_{0, k_0}| 2^{q (k_0 -1)} \}
$$
and ${\mathcal F}_{0,\sigma_1}^{-,2}$ is defined similarly with the opposite inequality
holding. 

\subsection*{The proof of \eqref{reduction-S} for ${\mathcal F}_{0,\sigma_1}^{-,1}$}
For ${\bf p} = (p,q) \in {\mathcal F}_{0,\sigma_1}^{-,1}$, we consider 
$D_{\bf p}^{-,1} = V_{\bf p} - R_{\bf p}^{-,1}$ where 
$$
R_{\bf p}^{-,1} a(x) \ := \ \psi_q(x_2) \int_{{\mathbb R}^2} e^{i T(x_2 - y_2)} \psi_p(x_1 - y_1)
a(y) \, dy
$$
so that 
\begin{equation}\label{D-1-diff}
\|D_{\bf p}^{-,1} a \|_{L^1} \ \lesssim \ |c_{j_1, k_1} |  \ 2^{p j_1 + q k_1} .
\end{equation}
We also have the complementary decay bound
\begin{equation}\label{D-1-decay}
\|D_{\bf p}^{-,1} a \|_{L^1} \ \lesssim \ \bigl[ |c_{0, k_0}| 2^{q (k_0 - 1)} \bigr]^{-\delta}
\end{equation}
which holds for some $0<\delta < 1$ and every ${\bf p} = (p,q) \in {\mathcal F}_{0,\sigma_1}^{-,1}$. 
This follows in the same way as before, using a $T^{*}T$ argument. Also
as before, using \eqref{D-1-diff} and \eqref{D-1-decay}, we can sum $\|D_{\bf p}^{-,1} a\|_{L^1}$ uniformly over 
${\bf p} \in {\mathcal F}_{0,\sigma_1}^{-,1}$ since $k_0 \ge 2$. 
To complete the proof of \eqref{reduction-S} for ${\mathcal F} = {\mathcal F}_{0,\sigma_1}^{-,1}$
we need to bound the $L^1$ norm of 
$\sum_{{\bf p}\in {\mathcal F}_{0,\sigma_1}^{-,1}} R_{\bf p}^{-,1} a(x)$ but once again, this
acts like a one parameter operator and the arguments of \cite{Pan} apply here. 

\subsection*{The proof of \eqref{reduction-S} for ${\mathcal F}_{0, \sigma_1}^{-,2}$}
Finally we show that \eqref{reduction-S} holds for 
${\mathcal F} = {\mathcal F}_{0,\sigma_1}^{-,2}$. From above, matters are reduced to showing
\begin{equation}\label{V}
\bigl\| \sum_{{\bf p} \in {\mathcal F}_{0,\sigma_1}^{-,2}} V_{\bf p} a \ \bigr\|_{L^1} \ \lesssim \ 1.
\end{equation}
Here we do not need to compare $V_{\bf p}$ with another operator; instead we use the cancellation of the atom $a$ to note that
$$
V_{(p,q)} a(x) \ = \ \psi_q(x_2) \int_{{\mathbb R}^2} e^{i Q(x_1 - y_1, x_2)} \bigl[
e^{i T(x_2 - y_2)} - e^{i T(x_2)} \bigr] \psi_p (x_1 - y_1) a(y) \, dy
$$
and so
\begin{equation}\label{V-diff}
\|V_{(p,q)} a \|_{L^1} \ \lesssim \ | c_{0, k_0}| 2^{q (k_0 -1) } 
\end{equation}
holds for all ${\bf p} = (p,q) \in {\mathcal F}_{0,\sigma_1}^{-,2}$. The complementay
decay bound (which follows by employing the $T^{*}T$ argument as before) is
\begin{equation}\label{V-decay}
\|V_{(p,q)} a \|_{L^1} \ \lesssim \ \bigl[ | c_{j_1, k_1}| 2^{p j_1 + q k_1 } \bigr]^{-\delta}
\end{equation}
and this holds for some $0<\delta < 1$ and all ${\bf p} = (p,q) \in {\mathcal F}_{0,\sigma_1}^{-,2}$. 
The bounds \eqref{V-diff} and \eqref{V-decay} imply, using $k_0 \ge 2$, that
$$
\sum_{{\bf p}\in {\mathcal F}_{0,\sigma_1}^{-,2}} \|V_{(p,q)} a \|_{L^1} \ \lesssim \ 1
$$
which implies \eqref{V}
and this completes the proof of Theorem \ref{2-d}.


\begin{thebibliography}{1}

\bibitem{CCW} Carbery, A., Christ, M, and Wright, J. \emph{Multidimensional van der Corput
and sublevel estimates}, J. Amer. Math. Soc., \textbf{12}, no. 4 (1999), 981-1015.

\bibitem{CWW1}Carbery, A., Wainger, S. and Wright, J. \emph{Double Hilbert transforms along 
polynomial surfaces in ${\mathbb R}^3$}, Duke Math. J., \textbf{101}, no. 3 (2000), 499-513.


\bibitem{CWW2}Carbery, A., Wainger, S. and Wright, J. \emph{Triple Hilbert transforms along polynomial surfaces in ${\mathbb R}^4$}, Revista Mat. Ibero., \textbf{25}, no. 2 (2009), 471-519.



\bibitem{CC} Chanillo, S. and Christ, M. \emph{Weak $(1,1)$ bounds for oscillatory singular integrals},
Duke Math. J. \textbf{55}, no. 1 (1987), 141-155.

\bibitem{CK} Cho, Y.K, Hong, S., Kim, J. and Yang, C.W. \emph{Triple Hilbert transforms along polynomial surfaces},
Integral Equations Operator Theory \textbf{65}, no. 4 (2009), 485-528.

\bibitem{C} Christ, M. \emph{Weak type $(1,1)$ bounds for rough operators}, Annals of Math. \textbf{128}, no. 1 (1988),
19-42.

\bibitem{D} Duoandikoetxea, J. \emph{Multiple singular integrals and maximal functions along
hypersurfaces}, Annal. Inst. Fourier \textbf{36}, no. 36 (1986), 185-206.


\bibitem{RF} Fefferman, R. \emph{Caler\'on-Zygmund theory for product domains: $H^p$ theory},
Proc. Nat. Acad. Sci. \textbf{86} (1986), no. 4, 840-843.

\bibitem{Pan}Hu, Y., and Pan, Y. \emph{Boundedness of oscillatory
singular integrals on Hardy spaces. }, Arkiv f\"or matematik \textbf{30}, no.1 (1992),
311-320. 

\bibitem{L} Larsen, R. \emph{An introduction to the theory of multipliers}, Springer-Verlag,
New York - Heidelberg, 1971.

\bibitem{NW} Nagel, A. and Wainger, S. \emph{$L^2$
boundedness of Hilbert transforms along surfaces and convolution operators
homogeneous with respect to a multiple parameter group}, Amer. J. Math. \textbf{99}, no. 4 (1977),
761-785.

\bibitem{P} Patel, S. \emph{Double Hilbert transforms along polynomial surfaces in ${\mathbb R}^3$},
Glasgow Math. J. \textbf {50}, no. 3 (2008), 395-428.

\bibitem{JP} Pipher, J. \emph{Journ\'e's covering lemma and its extension to higher dimensions},
Duke Math. J. \textbf{53}, no. 3 (1986), 683-690.

\bibitem{PY} Pramanik, M. and Yang, C.Y. \emph{Double Hilbert transforms along real-analytic
surfaces in ${\mathbb R}^{d+2}$}, J. London Math. Soc. \textbf{77}, no. 2 (2008), 363-386.

\bibitem{RS1}Ricci, F., \& Stein, E. M. \emph{Harmonic analysis
on nilpotent groups and singular integrals. I. Oscillatory integrals.}
Journal of Functional Analysis, \textbf{73}, no.1 (1987), 179-194.

\bibitem{RS2}Ricci, F., \& Stein, E. M. \emph{Harmonic analysis
on nilpotent groups and singular integrals II. Singular kernels supported on submanifolds.}
Journal of Functional Analysis, \textbf{78}, no.1 (1988), 56-84.

\bibitem{RS3}Ricci, F., \& Stein, E. M. \emph{Multiparameter singular
integrals and maximal functions.}, Annals Instit. Fourier (Genoble), \textbf{42}, no. 3 (1992), 367-670.




\bibitem{S}Stein, E. M. \emph{Harmonic Analysis: Real-Variable
Methods, Orthogonality, and Oscillatory Integrals}, Princeton University Press, 1993.

\bibitem{B}Street, B. \emph{Multi-parameter singular integrals},
Annals of Mathematics Studies (Vol.
189). Princeton University Press, 2014.

\bibitem{B2} Street, B. \emph{Multi-parameter singular Radon transforms I: $L^2$ theory},
J. Anal. Math. \textbf{116} (2012), 83-162.


\end{thebibliography}
\end{document}